\numberwithin{equation}{section}
\numberwithin{figure}{section}
  \theoremstyle{remark}
  \newtheorem*{notation*}{\protect\notationname}
\theoremstyle{plain}
\newtheorem{thm}{\protect\theoremname}
  \theoremstyle{definition}
  \newtheorem{defn}[thm]{\protect\definitionname}
  \theoremstyle{plain}
  \newtheorem{lem}[thm]{\protect\lemmaname}
  \theoremstyle{remark}
  \newtheorem{rem}[thm]{\protect\remarkname}
  \providecommand{\definitionname}{Definition}
  \providecommand{\lemmaname}{Lemma}
  \providecommand{\notationname}{Notation}
  \providecommand{\remarkname}{Remark}
\providecommand{\theoremname}{Theorem}
\begin{document}

\begin{frontmatter}

\title{Asymptotic analysis of a semi-linear elliptic system in perforated
domains: well-posedness and correctors for the homogenization limit}


\author[aff1]{Vo Anh Khoa\corref{mycorrespondingauthor}}
\cortext[mycorrespondingauthor]{Corresponding author}
\ead{khoa.vo@gssi.infn.it, vakhoa.hcmus@gmail.com}

\author[aff2]{Adrian Muntean}
\ead{adrian.muntean@kau.se}

\address[aff1]{Mathematics and Computer Science Division,
Gran Sasso Science Institute, L'Aquila, Italy}
\address[aff2]{Department of Mathematics and Computer Science, Karlstad University,
Sweden}

\begin{abstract}
In this study, we prove results on the weak solvability and homogenization
of a microscopic semi-linear elliptic system posed in
perforated media. The model presented here explores 
the interplay between stationary diffusion and both surface and volume
chemical reactions in porous media. Our interest lies in deriving homogenization limits (upscaling) for alike systems and particularly in justifying rigorously the obtained averaged descriptions. Essentially, we prove the well-posedness
of the microscopic problem ensuring also the positivity and boundedness
of the involved concentrations and then use the structure of the two scale expansions to derive corrector estimates delimitating this way the convergence rate of the asymptotic approximates to the macroscopic limit concentrations. Our techniques include Moser-like iteration techniques, a variational formulation, two-scale asymptotic expansions as well as energy-like estimates.  
\end{abstract}

\begin{keyword}
Corrector estimates\sep Homogenization\sep Elliptic systems\sep Perforated domains 
\MSC[2010] 35B27\sep 35C20  \sep 76M30\sep 35B09
\end{keyword}

\end{frontmatter}

\linenumbers

\section{Introduction}

We study the semi-linear elliptic boundary-value
problem of the form
\begin{equation}
\left(P^{\varepsilon}\right):\;\begin{cases}
\mathcal{A}^{\varepsilon}u_{i}^{\varepsilon}\equiv\nabla\cdot\left(-d_{i}^{\varepsilon}\nabla u_{i}^{\varepsilon}\right)=R_{i}\left(u^{\varepsilon}\right), & \mbox{in}\;\Omega^{\varepsilon}\subset\mathbb{R}^{d},\\
d_{i}^{\varepsilon}\nabla u_{i}^{\varepsilon}\cdot\mbox{n}=\varepsilon\left(a_{i}^{\varepsilon}u_{i}^{\varepsilon}-b_{i}^{\varepsilon}F_{i}\left(u_{i}^{\varepsilon}\right)\right), & \mbox{on}\;\Gamma^{\varepsilon},\\
u_{i}^{\varepsilon}=0, & \mbox{on}\;\Gamma^{ext},
\end{cases}\label{Pep}
\end{equation}
for $i\in\left\{ 1,...,N\right\} $ ($N\ge2, d\in\{2,3\}$). Following \cite{AWR}, this system models the diffusion in a porous medium as well as the aggregation, dissociation and surface deposition of $N$ interacting populations of colloidal particles  indexed by 
 $u_{i}^{\varepsilon}$.  As short-hand notation,  $u^{\varepsilon}:=\left(u_{1}^{\varepsilon},...,u_{N}^{\varepsilon}\right)$
points out the vector of these concentrations.  Such scenarios arise in drug-delivery mechanisms in human bodies and often includes cross- and thermo-diffusion which are triggers of our motivation (compare \cite{dGM62} for the Sorret and Dufour effects and \cite{FIMU12,VE09} for related cross-diffusion
and chemotaxis-like systems).

The model \eqref{Pep} involves a
number of parameters: $d_{i}^{\varepsilon}$ represents molecular
diffusion coefficients, $R_{i}$ represents the volume reaction rate, 
$a_{i}^{\varepsilon},b_{i}^{\varepsilon}$ are the so-called deposition coefficients, while $F_{i}$ indicates a
surface chemical reaction for the immobile species. We refer to \eqref{Pep} as problem  $\left(P^{\varepsilon}\right)$.

The main purpose of this paper is to obtain corrector estimates that delimitate the error made when homogenizing (averaging, upscaling, coarse graining...) the problem  $\left(P^{\varepsilon}\right)$, i.e. we want to estimate the speed of convergence as $\varepsilon \to 0$ of suitable norms of differences in micro-macro concentrations and micro-macro concentration gradients.  This way we justify rigorously the upscaled models derived in \cite{AWR} and prepare the playground to obtain corrector estimates for the thermo-diffusion scenario discussed in \cite{NHM}. 
From the corrector estimates perspective, the major mathematical difficulty we meet here is the presence of the nonlinear surface reaction term. To quantify its contribution to the corrector terms we use an energy-like approach very much inspired by \cite{CP99}. The main result of the paper is Theorem \ref{thm:10} where we state the corrector estimate. It is worth noting that this work goes along the line open by our works \cite{Ptash} (correctors via periodic unfolding) and \cite{Tycho} (correctors by special test functions adapted to the local periodicity of the microstructures). An alternative strategy to derive correctors for our scenario  could in principle exclusively rely on periodic unfolding, refolding and defect operators approach  if the boundary conditions along the microstructure would be of homogeneous Neumann type; compare \cite{Sina1} and \cite{Sina2}. 

The corrector estimates obtained with this framework can
be further used to design convergent multiscale finite element methods for
the studied PDE system (see e.g. \cite{Hou1999} for the basic idea of
the  MsFEM approach and \cite{Legoll2014} for an application to perforated
media).

The paper is organized as follows: In Section \ref{TP} we start off with a  set of technical preliminaries focusing especially on the working assumptions on the data and the description of the microstructure  of the porous medium.   The weak solvability of the microscopic model  is established in Section \ref{WP}. The homogenization method is applied in Section \ref{HM} to the problem $\left(P^{\varepsilon}\right)$. This is the place where we derive the corrector estimates and establish herewith the convergence rate of the homogenization process. A brief discussion (compare Section \ref{D}) closes the paper. 

\section{Preliminaries}\label{TP}

\subsection{Description of the geometry}

The geometry of our porous medium is sketched in Figure \ref{fig:1} (left), together with the choice of perforation (referred here to also as "microstructure") cf. Figure \ref{fig:1} (right).  We refer the reader to 
\cite{HJ91} for a concise mathematical representation of the perforated geometry. 
In the same spirit, take  $\Omega$ be a bounded open domain in $\mathbb{R}^{d}$ with a
piecewise smooth boundary $\Gamma=\partial\Omega$. Let $Y$ be the
unit representative cell, i.e. 
\[
Y:=\left\{ \sum_{i=1}^{d}\lambda_{i}\vec{e}_{i}:0<\lambda_{i}<1\right\} ,
\]
where we denote by $\vec{e}_{i}$ by $i$th unit vector in $\mathbb{R}^{d}$.

Take $Y_{0}$ the open subset of $Y$ with a piecewise
smooth boundary $\partial Y_{0}$ in such a way that $\overline{Y_{0}}\subset\overline{Y}$.
In the porous media terminology, $Y$ is the unit cell made of two
parts: the gas phase (pore space) $Y\backslash\overline{Y_{0}}$ and
the solid phase $Y_{0}$.

Let $Z\subset\mathbb{R}^{d}$ be a hypercube. Then for $X\subset Z$
we denote by $X^{k}$ the shifted subset
\[
X^{k}:=X+\sum_{i=1}^{d}k_{i}\vec{e}_{i},
\]
where $k=\left(k_{1},...,k_{d}\right)\in\mathbb{Z}^{d}$ is a vector
of indices.

Setting $Y_{1}=Y\backslash\overline{Y_{0}}$, we now define the pore
skeleton by
\[
\Omega_{0}^{\varepsilon}:=\bigcup_{k\in\mathbb{Z}^{d}}\left\{ \varepsilon Y_{0}^{k}:Y_{0}^{k}\subset\Omega\right\} ,
\]
where $\varepsilon$ is observed as a given scale factor or homogenization
parameter.

It thus comes out that the total pore space is
\[
\Omega^{\varepsilon}:=\Omega\backslash\overline{\Omega_{0}^{\varepsilon}},
\]
for $\varepsilon Y_{0}^{k}$ the $\varepsilon$-homotetic set of $Y_{0}^{k}$,
while the total pore surface of the skeleton is denoted by
\[
\Gamma^{\varepsilon}:=\partial\Omega_{0}^{\varepsilon}=\bigcup_{k\in\mathbb{Z}^{d}}\left\{ \varepsilon\Gamma^{k}:\Gamma^{k}\subset\Omega\right\} .
\]
The exterior boundary of $\Omega^{\varepsilon}$
is certainly a hypersurface in $\mathbb{R}^{d}$, denoted by $\Gamma^{ext}=\partial\Omega^{\varepsilon}\backslash\Gamma^{\varepsilon}$, 
where it has a nonzero $\left(d-1\right)$-dimensional measure, satisfies
$\Gamma^{ext}\cap\Gamma^{\varepsilon}=\emptyset$ and coincides with
$\Gamma$. Moreover, $\mbox{n}$ denotes the unit normal vector to
$\Gamma^{\varepsilon}$.

Finally, our perforated domain $\Omega^{\varepsilon}$ is assumed
to be connected through the gas phase. Notice here that $\Gamma^{ext}$
is smooth.

\begin{figure}[!h]
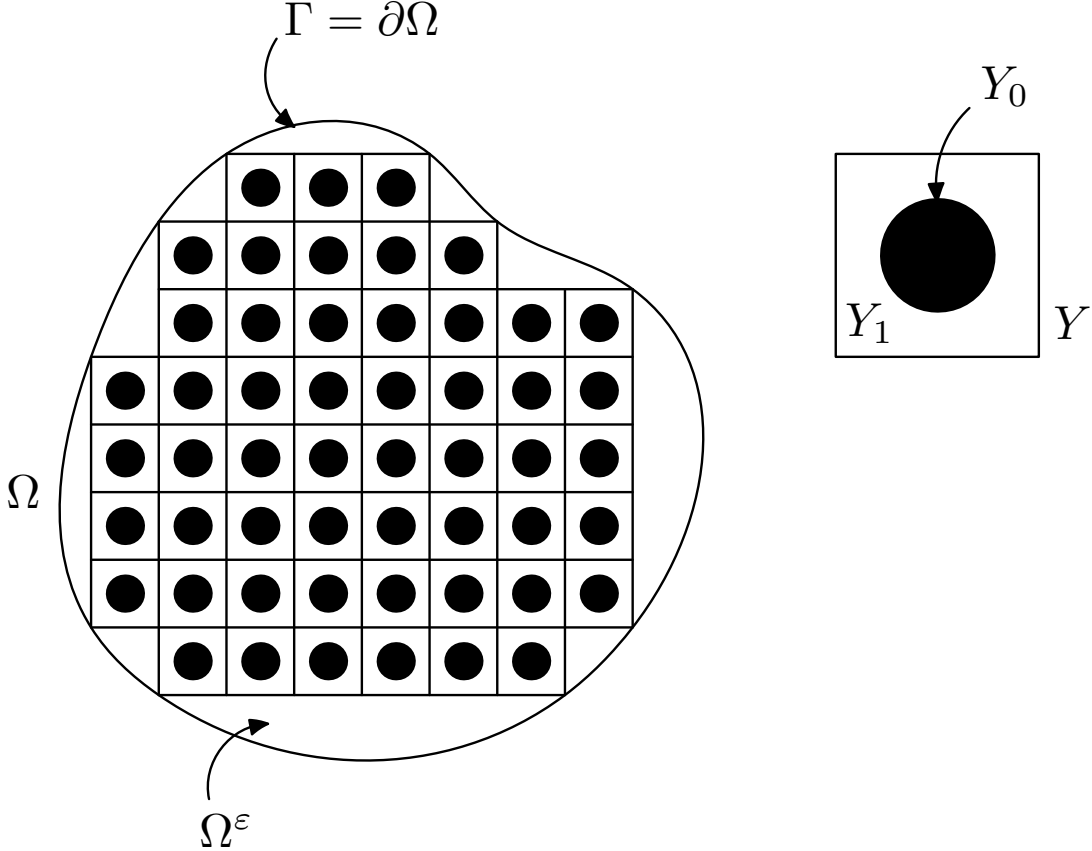
 	
\centering 	
	\parbox{10cm}{\convertMPtoPDF{fig.man}{1.8}{1.8}}	
	\caption{Admissible two-dimensional perforated domain (left) and basic geometry of the microstructure (right).}
	\label{fig:1}
\end{figure}

N.B. This paper aims at understanding the problem in two or three space dimensions. However, all our results hold also for $d\geq 3$. Throughout this paper, $C$ denotes a generic constant which can change
from line to line. If not otherwise stated, the constant $C$ is independent
of the choice of $\varepsilon$. 

\subsection{Notation. Assumptions on the data}

We denote by  $x\in\Omega^{\varepsilon}$ the macroscopic variable
 and by $y=x/\varepsilon$ the microscopic variable representing  fast variations at the microscopic
geometry. With this convention in view, we write
\[
d_{i}^{\varepsilon}\left(x\right)=d_{i}\left(\frac{x}{\varepsilon}\right)=d_{i}\left(y\right).
\]

A similar meaning is given to all involved "oscillating" data, e.g. to $a_{i}^{\varepsilon}\left(x\right)$,
$b_{i}^{\varepsilon}\left(x\right)$.

We now make the following set of assumptions:

$\left(\mbox{A}_{1}\right)$ the diffusion coefficient $d_{i}^{\varepsilon}\in L^{\infty}\left(\mathbb{R}^{d}\right)$
is $Y$-periodic, and it exists a positive constant $\alpha_{i}$
such that
\[
d_{i}\left(y\right)\xi_{i}\xi_{j}\ge\alpha_{i}\left|\xi\right|^{2}\quad\mbox{for any}\;\xi\in\mathbb{R}^{d}.
\]

$\left(\mbox{A}_{2}\right)$ the deposition coefficients $a_{i}^{\varepsilon},b_{i}^{\varepsilon}\in L^{\infty}\left(\Gamma^{\varepsilon}\right)$
are positive and $Y$-periodic.

$\left(\mbox{A}_{3}\right)$ the reaction rates $R_{i}:\Omega^{\varepsilon}\times\left[0,\infty\right)^{N}\to\mathbb{R}$
and $F_{i}:\Gamma^{\varepsilon}\times\left[0,\infty\right)\to\mathbb{R}$
are Carath\'eodory functions, i.e. they are, respectively, continuous
in $\left[0,\infty\right)^{N}$ and $\left[0,\infty\right)$ with
respect to $x$ variable (in the ``almost all'' sense), and measurable
in $\Omega^{\varepsilon}$ and $\Gamma^{\varepsilon}$ with essential
boundedness with respect to concentrations $u_{i}^{\varepsilon}\ge0$.

$\left(\mbox{A}_{4}\right)$ The chemical rate $R_i$ and $F_i$ are sublinear
in the sense that for any  $p=\left(p_{1},...,p_{N}\right)$
\[
R_{i}\left(p\right)\le C\left(1+\sum_{j=1,j\ne i}^{N}p_{i}p_{j}\right)\quad\mbox{for}\;p\ge0,
\]
\[
F_{i}\left(p_{i}\right)\le C\left(1+p_{i}\right)\quad\mbox{for}\;p_{i}\ge0,\]
for any  $p=\left(p_{1},...,p_{N}\right)$.

Furthermore, assume that $R_{i}\left(p\right)/p_{i}$ is decreasing
and $F_{i}\left(p_{i}\right)/p_{i}$ is increasing in $p_{i}$ for any 
$p>0$.

$\left(\mbox{A}_{5}\right)$ For every $\varepsilon>0$, there exist
vectors ($x$-dependent) $r_{0}^{\varepsilon},r_{\infty}^{\varepsilon},f_{0}^{\varepsilon},f_{\infty}^{\varepsilon}$
whose elements are
\[
r_{0,i}^{\varepsilon}=\lim_{u_{i}^{\varepsilon}\to0^{+}}\frac{R_{i}\left(u^{\varepsilon}\right)}{u_{i}^{\varepsilon}},\quad r_{\infty,i}^{\varepsilon}=\lim_{u_{i}^{\varepsilon}\to\infty}\frac{R_{i}\left(u^{\varepsilon}\right)}{u_{i}^{\varepsilon}},
\]
\[
f_{0,i}^{\varepsilon}=\lim_{u_{i}^{\varepsilon}\to0^{+}}\varepsilon\left(a_{i}^{\varepsilon}-b_{i}^{\varepsilon}\frac{F_{i}\left(u_{i}^{\varepsilon}\right)}{u_{i}^{\varepsilon}}\right),\quad f_{\infty,i}^{\varepsilon}=\lim_{u_{i}^{\varepsilon}\to\infty}\varepsilon\left(a_{i}^{\varepsilon}-b_{i}^{\varepsilon}\frac{F_{i}\left(u_{i}^{\varepsilon}\right)}{u_{i}^{\varepsilon}}\right).
\]

$\left(\mbox{A}_{6}\right)$ $R_{i}$ and $F_{i}$ satisfy the growth conditions:  
\begin{equation}
\left|R_{i}\left(x,p\right)\right|\le C\sum_{i=1}^{N}\left(1+p_{i}\right)^{2}\quad\mbox{for}\;p\ge0,\label{eq:c1}
\end{equation}
\begin{equation}
\left|a_{i}^{\varepsilon}p_{i}-b_{i}^{\varepsilon}F_{i}\left(p_{i}\right)\right|\le C\left(1+p_{i}\right)^{2}\quad\mbox{for}\;p_{i}\ge0.\label{eq:c2}
\end{equation}

Let us define the function space
\[
V^{\varepsilon}:=\left\{ v\in H^{1}\left(\Omega^{\varepsilon}\right)|v=0\;\mbox{on}\;\Gamma^{ext}\right\} ,
\]
which is a closed subspace of the Hilbert space $H^{1}\left(\Omega^{\varepsilon}\right)$,
and thus endowed with the semi-norm
\[
\left\Vert v\right\Vert _{V^{\varepsilon}}=\left(\sum_{i=1}^{d}\int_{\Omega^{\varepsilon}}\left|\frac{\partial v}{\partial x_{i}}\right|^{2}dx\right)^{1/2}\quad\mbox{for all}\;v\in V^{\varepsilon}.
\]

Obviously, this norm is equivalent to the usual $H^{1}$-norm by the
Poincar\'e inequality. Moreover, this equivalence is uniform in $\varepsilon$
(cf. \cite[Lemma 2.1]{CP99}).

We introduce the Hilbert spaces
\[
\mathcal{H}\left(\Omega^{\varepsilon}\right)=L^{2}\left(\Omega^{\varepsilon}\right)\times...\times L^{2}\left(\Omega^{\varepsilon}\right),\quad\mathcal{V}^{\varepsilon}=V^{\varepsilon}\times...\times V^{\varepsilon},
\]
with the inner products defined  respectively by
\[
\left\langle u,v\right\rangle _{\mathcal{H}\left(\Omega^{\varepsilon}\right)}:=\sum_{i=1}^{N}\int_{\Omega^{\varepsilon}}u_{i}v_{i}dx,\quad u=\left(u_{1},...,u_{N}\right),v=\left(v_{1},...,v_{N}\right)\in\mathcal{H}\left(\Omega^{\varepsilon}\right),
\]
\[
\left\langle u,v\right\rangle _{\mathcal{V}^{\varepsilon}}:=\sum_{i=1}^{N}\sum_{j=1}^{n}\int_{\Omega^{\varepsilon}}\frac{\partial u_{i}}{\partial x_{j}}\frac{\partial v_{i}}{\partial x_{j}}dx,\quad u=\left(u_{1},...,u_{N}\right),v=\left(v_{1},...,v_{N}\right)\in\mathcal{V}^{\varepsilon}.
\]

Furthermore, the notation $\mathcal{H}\left(\Gamma^{\varepsilon}\right)$
indicates the corresponding  product of $L^{2}\left(\Gamma^{\varepsilon}\right)$ spaces. 
For $q\in\left(2,\infty\right]$,  the following spaces are also
used
\[
\mathcal{W}^{q}\left(\Omega^{\varepsilon}\right)=L^{q}\left(\Omega^{\varepsilon}\right)\times...\times L^{q}\left(\Omega^{\varepsilon}\right),
\]
\[
\mathcal{W}^{q}\left(\Gamma^{\varepsilon}\right)=L^{q}\left(\Gamma^{\varepsilon}\right)\times...\times L^{q}\left(\Gamma^{\varepsilon}\right).
\]

\section{Well-posedness of the microscopic model}\label{WP}

Before studying the asymptotics behaviour as $\varepsilon\to 0$ (the homogenization limit), we must ensure the well-posedness of the microstructure model. In this section we focus only on the weak solvability of the problem, the stability with respect to the initial data and all parameter being straightforward to prove.  
We remark at this stage that
the structure of the model equation has attracted much attention. For
example, Amann used in \cite{Amann72} the method of sub- and super-
solutions to prove the existence of positive solutions when a 
Robin boundary condition is considered. Brezis and Oswald introduced
in \cite{BO86} an energy minimization approach  to guarantee the existence, uniqueness and positivity
results for the semi-linear elliptic problem with zero Dirichlet boundary
conditions. Very recently, Garc\'ia-Meli\'an et al. \cite{GMRL09}
extended the result in \cite{BO86} (and also of other previous works including 
\cite{CCLG02,CP03}) to problems involving nonlinear boundary conditions
of mixed type. For what we are concerned here,  we will  use Moser-like iterations technique (see the original works by Moser \cite{Mos60,Mos64})
to prove $L^{\infty}$-bounds for all concentrations  and then follow the strategy provided
by Brezis and Oswald \cite{BO86} to study the well-posedness of $\left(P^{\varepsilon}\right)$.

\begin{defn}
A function $u^{\varepsilon}\in\mathcal{V}^{\varepsilon}$ is a weak
solution to $\left(P^{\varepsilon}\right)$ provided that
\begin{equation}
\sum_{i=1}^{N}\int_{\Omega^{\varepsilon}}\left(d_{i}^{\varepsilon}\nabla u_{i}^{\varepsilon}\nabla\varphi_{i}-R_{i}\left(u^{\varepsilon}\right)\varphi_{i}\right)dx-\sum_{i=1}^{N}\varepsilon\int_{\Gamma^{\varepsilon}}\left(a_{i}^{\varepsilon}u_{i}^{\varepsilon}-b_{i}^{\varepsilon}F_{i}\left(u_{i}^{\varepsilon}\right)\right)\varphi_{i}dS_{\varepsilon}=0\quad\mbox{for all}\;\varphi\in\mathcal{V}^{\varepsilon}.\label{eq:weaksol}
\end{equation}

\end{defn}

\begin{defn}
By means of the usual variational characterization, the principal
eigenvalue of $\left(P^{\varepsilon}\right)$ is defined by
\begin{equation}
\lambda_{1}\left(p^{\varepsilon},q^{\varepsilon}\right):=\inf_{u^{\varepsilon}\in\mathcal{V}^{\varepsilon},{\displaystyle \sum_{i=1}^{N}}\left|u_{i}^{\varepsilon}\right|^{2}\neq0}\frac{{\displaystyle \sum_{i=1}^{N}\left(\alpha\int_{\Omega^{\varepsilon}}\left|\nabla_{x}u_{i}^{\varepsilon}\right|^{2}dx-N\int_{\Omega^{\varepsilon}}p_{i}^{\varepsilon}\left|u_{i}^{\varepsilon}\right|^{2}dx-N\int_{\Gamma^{\varepsilon}}q_{i}^{\varepsilon}\left|u_{i}^{\varepsilon}\right|^{2}dS_{\varepsilon}\right)}}{{\displaystyle \sum_{i=1}^{N}\int_{\Omega^{\varepsilon}}\left|u_{i}^{\varepsilon}\right|^{2}dx}},\label{eq:princi-eigen}
\end{equation}
where $p_{i}^{\varepsilon}$ and $q_{i}^{\varepsilon}$ are measurable
such that either they are simultaneously bounded from above or from
below (this leads to $\lambda_{1}\in\left(-\infty,\infty\right]$
or $\lambda_{1}\in\left[-\infty,\infty\right)$, correspondingly).
Here, we denote $\alpha:=\min\left\{ \alpha_{1},...,\alpha_{N}\right\} $.\end{defn}
\begin{lem}
\label{lem:3}Assume $\left(\mbox{A}_{1}\right)$-$\left(\mbox{A}_{5}\right)$
and replace $\left(\mbox{A}_{4}\right)$ by $\left(\mbox{A}_{6}\right)$.
Let $u^{\varepsilon}\in\mathcal{V}^{\varepsilon}\cap\mathcal{H}\left(\Gamma^{\varepsilon}\right)$
be a weak solution to $\left(P^{\varepsilon}\right)$, then $u^{\varepsilon}\in\mathcal{W}^{\infty}\left(\Omega^{\varepsilon}\right)$
and it exists an $\varepsilon$-independent constant $C>0$ such
that
\[
\left\Vert u^{\varepsilon}\right\Vert _{\mathcal{W}^{\infty}\left(\Omega^{\varepsilon}\right)}\le C\left(1+\left\Vert u^{\varepsilon}\right\Vert _{\mathcal{H}\left(\Omega^{\varepsilon}\right)}+\left\Vert u^{\varepsilon}\right\Vert _{\mathcal{H}\left(\Gamma^{\varepsilon}\right)}\right).
\]
\end{lem}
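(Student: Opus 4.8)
The plan is to establish the bound by a Moser-type iteration carried out directly on the weak formulation \eqref{eq:weaksol}. Because the volume reaction $R_{i}$ couples all components through the quadratic growth in $\left(\mbox{A}_{6}\right)$, it is convenient to pass to the scalar quantity $v^{\varepsilon}:=1+\sum_{j=1}^{N}\left|u_{j}^{\varepsilon}\right|$ and to establish, by induction over an increasing geometric sequence of Lebesgue exponents, a bound on $\left\Vert v^{\varepsilon}\right\Vert _{L^{p}\left(\Omega^{\varepsilon}\right)}$ for all finite $p$ whose constant does not blow up as $p\to\infty$; letting $p\to\infty$ then yields the $\mathcal{W}^{\infty}$-bound, since $\left\Vert u^{\varepsilon}\right\Vert _{\mathcal{W}^{\infty}\left(\Omega^{\varepsilon}\right)}\le\left\Vert v^{\varepsilon}\right\Vert _{L^{\infty}\left(\Omega^{\varepsilon}\right)}$. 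Three $\varepsilon$-uniform ingredients enter: (i) the Poincar\'e inequality on $V^{\varepsilon}$ recorded above (cf. \cite[Lemma 2.1]{CP99}); (ii) the associated $\varepsilon$-uniform Sobolev embedding $V^{\varepsilon}\hookrightarrow L^{2^{*}}\left(\Omega^{\varepsilon}\right)$, $2^{*}=2d/(d-2)$, obtained through a bounded $\varepsilon$-uniform extension operator $\Omega^{\varepsilon}\to\Omega$ (see \cite{CP99,HJ91}), with $2^{*}$ replaced by an arbitrary finite exponent when $d=2$; and (iii) the scaled trace inequality, valid for all $w\in H^{1}\left(\Omega^{\varepsilon}\right)$,
\[
\varepsilon\int_{\Gamma^{\varepsilon}}\left|w\right|^{2}dS_{\varepsilon}\le C\left(\left\Vert w\right\Vert _{L^{2}\left(\Omega^{\varepsilon}\right)}^{2}+\varepsilon\left\Vert w\right\Vert _{L^{2}\left(\Omega^{\varepsilon}\right)}\left\Vert \nabla w\right\Vert _{L^{2}\left(\Omega^{\varepsilon}\right)}\right),
\]
whose $\varepsilon$-prefactor matches the $(d-1)$-dimensional scaling of $\Gamma^{\varepsilon}$, so that $C$ is independent of $\varepsilon$.

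For the base step one tests \eqref{eq:weaksol} with $\varphi_{i}=u_{i}^{\varepsilon}$, uses $\left(\mbox{A}_{1}\right)$ for coercivity, bounds the volume and surface integrands by \eqref{eq:c1}--\eqref{eq:c2}, controls the surface term by the scaled trace inequality above, and absorbs the top-order gradient by Young's inequality; Sobolev embedding then gives $u^{\varepsilon}\in\mathcal{W}^{2^{*}}\left(\Omega^{\varepsilon}\right)$ together with a bound of the asserted form. For the inductive step, fix $i$ and $\beta\ge1$ and test \eqref{eq:weaksol} with $\varphi_{i}=u_{i}^{\varepsilon}\,\theta_{M}\!\left(u_{i}^{\varepsilon}\right)^{2(\beta-1)}$ and $\varphi_{j}=0$ for $j\ne i$, where $\theta_{M}(s)=\min\{|s|,M\}$ truncates so that $\varphi_{i}\in V^{\varepsilon}$ and is bounded. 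Setting $w_{i,M}:=\left|u_{i}^{\varepsilon}\right|\theta_{M}\!\left(u_{i}^{\varepsilon}\right)^{\beta-1}$, the diffusion term is bounded below by $c\,\beta^{-1}\left\Vert \nabla w_{i,M}\right\Vert _{L^{2}\left(\Omega^{\varepsilon}\right)}^{2}$; the volume term is estimated by $\left(\mbox{A}_{6}\right)$ and H\"older through norms of $v^{\varepsilon}$ at the preceding integrability level times $\left\Vert w_{i,M}\right\Vert$ in a suitable intermediate space; the surface term is handled again by the scaled trace inequality applied to $w_{i,M}$, the gradient part being absorbed into the coercive term after choosing the Young parameter of order $\beta^{-1}$. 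Invoking the $\varepsilon$-uniform Sobolev embedding for $w_{i,M}$, letting $M\to\infty$ by monotone convergence, and summing over $i$ yields a recursion of the shape $\left\Vert v^{\varepsilon}\right\Vert _{L^{\chi p_{k}}\left(\Omega^{\varepsilon}\right)}\le\left(C\beta_{k}^{\kappa}\right)^{1/p_{k}}\max\{1,\left\Vert v^{\varepsilon}\right\Vert _{L^{p_{k}}\left(\Omega^{\varepsilon}\right)}\}$ with $p_{k}=2^{*}\chi^{k}$, $\chi=2^{*}/2>1$; multiplying the resulting geometric product of constants and using the base step gives $\left\Vert v^{\varepsilon}\right\Vert _{L^{\infty}\left(\Omega^{\varepsilon}\right)}\le C\max\{1,\left\Vert v^{\varepsilon}\right\Vert _{L^{2^{*}}\left(\Omega^{\varepsilon}\right)}\}$, which is the claim.

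The crux — and what prevents the classical scalar interior Moser argument from applying verbatim — is twofold. First, the nonlinear Robin condition on $\Gamma^{\varepsilon}$ produces at level $\beta$ a surface integral of a power of $u_{i}^{\varepsilon}$ that must be dominated by interior quantities uniformly in $\varepsilon$; the scaled trace inequality does exactly this, but one has to track how its effective constant degrades with $\beta$ (it grows only polynomially), so that the infinite product of these constants over the iteration still converges. Second, the quadratic coupling through $R_{i}$ forbids a componentwise iteration: the right-hand side at step $\beta$ always involves norms of all $u_{j}^{\varepsilon}$, which is why the single scalar $v^{\varepsilon}$ must be iterated and the full previous-step vector bound fed into the current step. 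Checking that the H\"older/Young bookkeeping in these two points can be arranged with all constants $\varepsilon$-independent — this is where $\left(\mbox{A}_{1}\right)$, $\left(\mbox{A}_{6}\right)$ and the perforated-geometry inequalities are used in full — is the only non-routine part; the remainder is the standard iteration and passage to the limit.
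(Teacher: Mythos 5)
Your overall strategy is the same as the paper's: a Moser iteration on the weak formulation with truncated power test functions (the paper takes $\varphi_{i}^{\varepsilon}=\min\{v_{i}^{\beta+\frac{1}{2}},k_{i}^{\beta+\frac{1}{2}}\}-1$ with $v_{i}=u_{i}^{\varepsilon}+1$, playing the same role as your $u_{i}^{\varepsilon}\theta_{M}(u_{i}^{\varepsilon})^{2(\beta-1)}$), $\varepsilon$-uniform Sobolev and trace inequalities, a geometric sequence of exponents, and a convergent infinite product of constants. The genuine divergence is the treatment of the boundary, and that is where your sketch has a gap. The paper never tries to absorb the surface contributions into interior quantities: it uses the trace embedding $H^{1}(\Omega^{\varepsilon})\subset L^{q}(\partial\Omega^{\varepsilon})$ to raise the surface integrability in lockstep with the interior one and iterates the combined quantity $\|v\|_{r}$ containing both $\int_{\Omega^{\varepsilon}}|v_{i}|^{r}dx$ and $\int_{\Gamma^{\varepsilon}}|v_{i}|^{r}dS_{\varepsilon}$, so the iteration terminates exactly at $\|v\|_{2}$, i.e.\ at the right-hand side $1+\|u^{\varepsilon}\|_{\mathcal{H}(\Omega^{\varepsilon})}+\|u^{\varepsilon}\|_{\mathcal{H}(\Gamma^{\varepsilon})}$ of the statement. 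You instead aim at the strictly stronger interior-only bound $\|v^{\varepsilon}\|_{L^{\infty}(\Omega^{\varepsilon})}\le C\max\{1,\|v^{\varepsilon}\|_{L^{2^{*}}(\Omega^{\varepsilon})}\}$ and never connect it back to the quantity the lemma actually asserts.

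Two concrete problems follow from this. First, the base step does not deliver ``a bound of the asserted form'': under $(\mbox{A}_{6})$ the growth is quadratic, so testing with $\varphi_{i}=u_{i}^{\varepsilon}$ produces cubic terms $\int_{\Omega^{\varepsilon}}(1+|u^{\varepsilon}|)^{2}|u_{i}^{\varepsilon}|\,dx$ and $\varepsilon\int_{\Gamma^{\varepsilon}}(1+u_{i}^{\varepsilon})^{2}u_{i}^{\varepsilon}\,dS_{\varepsilon}$; after H\"older, Sobolev or trace interpolation and Young absorption these are controlled only by higher powers of the $L^{2}$ norms, not linearly, and the surface cubic term requires the scaled trace inequality applied to $w\sim v_{i}^{3/2}$, whose gradient $\sim v_{i}^{1/2}\nabla v_{i}$ is not absorbable into the coercive term $\int|\nabla u_{i}^{\varepsilon}|^{2}$ without further interpolation. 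So your starting $L^{2^{*}}$ bound, hence the final $L^{\infty}$ bound, would be polynomial in the data rather than of the stated form. Second, the same quadratic growth makes the recursion inhomogeneous: with $\varphi_{i}\sim u_{i}^{2\beta-1}$ the right-hand side carries $v^{2\beta+1}$ while the left-hand side controls $\|v^{\beta}\|^{2}$, an exponent mismatch of one power of $v$ per step; the clean homogeneous recursion $\|v\|_{\chi p_{k}}\le(C\beta_{k}^{\kappa})^{1/p_{k}}\max\{1,\|v\|_{p_{k}}\}$ you write down is therefore not what the computation gives without an additional normalization or interpolation argument, and this, together with the $\beta$-dependence of the trace constants, is exactly the ``bookkeeping'' you defer rather than verify. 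As written, the proposal does not close the proof of the lemma as stated.
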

\begin{proof}
Let $\beta\ge1$ and $k_{i}>1$ for all $i=\overline{1,N}$. We begin
by introducing a vector $\varphi^{\varepsilon}$ of test functions
$\varphi_{i}^{\varepsilon}=\min\left\{ v_{i}^{\beta+\frac{1}{2}},k_{i}^{\beta+\frac{1}{2}}\right\} -1$
where $v_{i}=u_{i}^{\varepsilon}+1$ with $u_{i}^{\varepsilon}$ as
in \eqref{eq:weaksol}. Thus, it is straightforward to show that $\varphi^{\varepsilon}\in\mathcal{V}^{\varepsilon}\cap\mathcal{H}\left(\Gamma^{\varepsilon}\right)$.
We have
\begin{eqnarray}
\alpha\left(\beta+\frac{1}{2}\right)\sum_{i=1}^{N}\int_{\left\{ v_{i}<k_{i}\right\} }v_{i}^{\beta-\frac{1}{2}}\left|\nabla v_{i}\right|^{2} & \le & \sum_{i=1}^{N}\int_{\Omega^{\varepsilon}}R_{i}\left(x,u^{\varepsilon}\right)\varphi_{i}^{\varepsilon}dx+\sum_{i=1}^{N}\int_{\Gamma^{\varepsilon}}F_{i}\left(x,u_{i}^{\varepsilon}\right)\varphi_{i}^{\varepsilon}dS_{\varepsilon}\nonumber \\
 & \le & C\sum_{i=1}^{N}\int_{\Omega^{\varepsilon}}\left|1+u_{i}^{\varepsilon}\right|^{2}v_{i}^{\beta+\frac{1}{2}}dx\nonumber \\
 &  & +C\sum_{i=1}^{N}\int_{\Gamma^{\varepsilon}}\left|1+u_{i}^{\varepsilon}\right|^{2}v_{i}^{\beta+\frac{1}{2}}dS_{\varepsilon}\nonumber \\
 & \le & C\sum_{i=1}^{N}\left(\int_{\Omega^{\varepsilon}}v_{i}^{\beta+\frac{3}{2}}dx+\int_{\Gamma^{\varepsilon}}v_{i}^{\beta+\frac{3}{2}}dS_{\varepsilon}\right),\label{eq:3.5}
\end{eqnarray}
where we have used \eqref{eq:c1} and \eqref{eq:c2}.

Now, for every $i\in\left\{ 1,...,N\right\} $, if we assign $\psi_{i}=\min\left\{ v_{i}^{\frac{\beta+\frac{3}{2}}{2}},k_{i}^{\frac{\beta+\frac{3}{2}}{2}}\right\} $,
then one has 
\begin{equation}
\left(\beta+\frac{1}{2}\right)v_{i}^{\beta-\frac{1}{2}}\left|\nabla v_{i}\right|^{2}\chi_{\left\{ v_{i}<k_{i}\right\} }=\frac{4\left(\beta+\frac{1}{2}\right)}{\left(\beta+\frac{3}{2}\right)^{2}}\left|\nabla\psi_{i}\right|^{2}.\label{eq:3.6-1}
\end{equation}

Since $\Omega^{\varepsilon}$ is a Lipschitz domain, then the trace
embedding $H^{1}\left(\Omega^{\varepsilon}\right)\subset L^{q}\left(\partial\Omega^{\varepsilon}\right)$
holds for $1\le q\le2_{\partial\Omega^{\varepsilon}}^{*}$, where
$2_{\partial\Omega^{\varepsilon}}^{*}=2\left(d-1\right)/\left(d-2\right)$
if $d\ge3$, and $2_{\partial\Omega^{\varepsilon}}^{*}=\infty$ if
$d=2$ (cf. \cite{Fan08}). Therefore, given $q\in\left(2,2^{*}\right]$
we apply this embedding to \eqref{eq:3.5} with the aid of \eqref{eq:3.6-1}
and then obtain
\begin{equation}
\frac{4\alpha\left(\beta+\frac{1}{2}\right)}{\left(\beta+\frac{3}{2}\right)^{2}}\sum_{i=1}^{N}\left[\left(\int_{\Gamma^{\varepsilon}}\left|\psi_{i}\right|^{q}dS_{\varepsilon}\right)^{\frac{2}{q}}-\int_{\Omega^{\varepsilon}}\left|\psi_{i}\right|^{2}dx\right]\le C\sum_{i=1}^{N}\left(\int_{\Omega^{\varepsilon}}v_{i}^{\beta+\frac{3}{2}}dx+\int_{\Gamma^{\varepsilon}}v_{i}^{\beta+\frac{3}{2}}dS_{\varepsilon}\right).\label{eq:3.7-1}
\end{equation}

We see that $\psi_{i}^{2}\le v^{\beta+\frac{3}{2}}$ and also
\[
\frac{1}{\beta+\frac{3}{2}}\le\frac{4\left(\beta+\frac{1}{2}\right)}{\left(\beta+\frac{3}{2}\right)^{2}}\le4
\]
holds for all $\beta\ge1$. As a result, \eqref{eq:3.7-1} yields
\begin{equation}
\sum_{i=1}^{N}\left(\int_{\Gamma^{\varepsilon}}\left|\psi_{i}\right|^{q}dS_{\varepsilon}\right)^{\frac{2}{q}}\le C\alpha^{-1}\left(\beta+\frac{3}{2}\right)\sum_{i=1}^{N}\left(\int_{\Omega^{\varepsilon}}v_{i}^{\beta+\frac{3}{2}}dx+\int_{\Gamma^{\varepsilon}}v_{i}^{\beta+\frac{3}{2}}dS_{\varepsilon}\right).\label{eq:3.8}
\end{equation}

Our next aim is to show that if for some $s\ge2$ we have $u^{\varepsilon}\in\mathcal{W}^{s}\left(\Omega^{\varepsilon}\right)\cap\mathcal{W}^{s}\left(\Gamma^{\varepsilon}\right)$,
then $u^{\varepsilon}\in\mathcal{W}^{ks}\left(\Omega^{\varepsilon}\right)\cap\mathcal{W}^{ks}\left(\Gamma^{\varepsilon}\right)$
for $k>1$ arbitrary at each $\varepsilon$-level. In fact, assume
that $u^{\varepsilon}\in\mathcal{W}^{\beta+\frac{3}{2}}\left(\Omega^{\varepsilon}\right)\cap\mathcal{W}^{\beta+\frac{3}{2}}\left(\Gamma^{\varepsilon}\right)$
then letting $k\to\infty$ in \eqref{eq:3.8} gives
\begin{equation}
\sum_{i=1}^{N}\left(\int_{\Gamma^{\varepsilon}}\left|v_{i}\right|^{\frac{q}{2}\left(\beta+\frac{3}{2}\right)}dS_{\varepsilon}\right)^{\frac{2}{q}}\le C\left(\beta+\frac{3}{2}\right)\sum_{i=1}^{N}\left(\int_{\Omega^{\varepsilon}}v_{i}^{\beta+\frac{3}{2}}dx+\int_{\Gamma^{\varepsilon}}v_{i}^{\beta+\frac{3}{2}}dS_{\varepsilon}\right).\label{eq:3.9-2}
\end{equation}

One obtains in the same manner that by the embedding $H^{1}\left(\Omega^{\varepsilon}\right)\subset L^{q}\left(\Omega^{\varepsilon}\right)$
(this is valid for $1\le q\le2_{\Omega^{\varepsilon}}^{*}$ where
$2_{\Omega^{\varepsilon}}^{*}=2d/\left(d-2\right)$ if $d\ge3$, and
$2_{\Omega^{\varepsilon}}^{*}=\infty$ if $d=2$; thus $q$ given
before is definitely valid), we are led to the following estimate
\begin{equation}
\sum_{i=1}^{N}\left(\int_{\Omega^{\varepsilon}}\left|v_{i}\right|^{\frac{q}{2}\left(\beta+\frac{3}{2}\right)}dx\right)^{\frac{2}{q}}\le C\left(\beta+\frac{3}{2}\right)\sum_{i=1}^{N}\left(\int_{\Omega^{\varepsilon}}v_{i}^{\beta+\frac{3}{2}}dx+\int_{\Gamma^{\varepsilon}}v_{i}^{\beta+\frac{3}{2}}dS_{\varepsilon}\right).\label{eq:3.10}
\end{equation}

Combining \eqref{eq:3.9-2}, \eqref{eq:3.10} and the Minkowski inequality
enables us to get
\[
\left(\int_{\Omega^{\varepsilon}}\left|v_{i}\right|^{\frac{q}{2}\left(\beta+\frac{3}{2}\right)}dx+\int_{\Gamma^{\varepsilon}}\left|v_{i}\right|^{\frac{q}{2}\left(\beta+\frac{3}{2}\right)}dS_{\varepsilon}\right)^{\frac{2}{q}}\le C\left(\beta+\frac{3}{2}\right)\sum_{i=1}^{N}\left(\int_{\Omega^{\varepsilon}}v_{i}^{\beta+\frac{3}{2}}dx+\int_{\Gamma^{\varepsilon}}v_{i}^{\beta+\frac{3}{2}}dS_{\varepsilon}\right),
\]
for all $i\in\left\{ 1,...,N\right\} $, which easily leads to, by
raising to the power $1/\left(\beta+\frac{3}{2}\right)$, the fact
that $u_{i}^{\varepsilon}\in L^{\frac{q}{2}\left(\beta+\frac{3}{2}\right)}\left(\Omega^{\varepsilon}\right)\cap L^{\frac{q}{2}\left(\beta+\frac{3}{2}\right)}\left(\Gamma^{\varepsilon}\right)$
for all $i\in\left\{ 1,...,N\right\} $; and hence $u^{\varepsilon}\in\mathcal{W}^{\frac{q}{2}\left(\beta+\frac{3}{2}\right)}\left(\Omega^{\varepsilon}\right)\cap\mathcal{W}^{\frac{q}{2}\left(\beta+\frac{3}{2}\right)}\left(\Gamma^{\varepsilon}\right)$.

The constant $k$  is indicated by $q/2>1$. Thus, if we
choose $q$ and $\beta$ such that
\[
\beta+\frac{3}{2}=2\left(\frac{q}{2}\right)^{n}\quad\mbox{for}\;n=0,1,2,...,
\]
and iterating the above estimate, we obtain, by induction, that
\begin{equation}
\left\Vert v\right\Vert _{2\left(\frac{q}{2}\right)^{n}}\le\prod_{j=0}^{n}\left(2\left(\frac{q}{2}\right)^{j}C\right)^{\frac{1}{2}\left(\frac{2}{q}\right)^{j}}\left\Vert v\right\Vert _{2},\label{eq:3.11}
\end{equation}
where we have denoted by
\[
\left\Vert v\right\Vert _{r}:=\sum_{i=1}^{N}\left(\int_{\Omega^{\varepsilon}}\left|v_{i}\right|^{r}dx+\int_{\Gamma^{\varepsilon}}\left|v_{i}\right|^{r}dS_{\varepsilon}\right)^{\frac{1}{r}}.
\]

It is interesting to point out that since the series $\sum_{n=0}^{\infty}\left(\frac{2}{q}\right)^{n}$
and $\sum_{n=0}^{\infty}n\left(\frac{2}{q}\right)^{n}$ are convergent
for $q>2$, we have
\[
\prod_{j=0}^{n}\left(2\left(\frac{q}{2}\right)^{j}C\right)^{\frac{1}{2}\left(\frac{2}{q}\right)^{j}}<\sqrt{\left(2C\right)^{\sum_{n=0}^{\infty}\left(\frac{2}{q}\right)^{n}}q^{\sum_{n=0}^{\infty}n\left(\frac{2}{q}\right)^{n}}}=C.
\]

Therefore, the constant in the right-hand side of \eqref{eq:3.11}
is indeed independent of $n$, and by passing $n\to\infty$ in \eqref{eq:3.11},
i.e. in the inequality,
\[
\left\Vert v\right\Vert _{\mathcal{W}^{2\left(\frac{q}{2}\right)^{n}}\left(\Omega^{\varepsilon}\right)}\le C\left(\left\Vert v\right\Vert _{\mathcal{H}\left(\Omega^{\varepsilon}\right)}+\left\Vert v\right\Vert _{\mathcal{H}\left(\Gamma^{\varepsilon}\right)}\right),
\]
we finally obtain
\[
\left\Vert v\right\Vert _{\mathcal{W}^{\infty}\left(\Omega^{\varepsilon}\right)}\le C\left(\left\Vert v\right\Vert _{\mathcal{H}\left(\Omega^{\varepsilon}\right)}+\left\Vert v\right\Vert _{\mathcal{H}\left(\Gamma^{\varepsilon}\right)}\right).
\]

Consequently, recalling $v_{i}=u_{i}^{\varepsilon}+1$, we have:
\[
\left\Vert u^{\varepsilon}\right\Vert _{\mathcal{W}^{\infty}\left(\Omega^{\varepsilon}\right)}\le C\left(1+\left\Vert u^{\varepsilon}\right\Vert _{\mathcal{H}\left(\Omega^{\varepsilon}\right)}+\left\Vert u^{\varepsilon}\right\Vert _{\mathcal{H}\left(\Gamma^{\varepsilon}\right)}\right).
\]
This step  completes the proof of the lemma.
\end{proof}

\begin{rem}
Using the trace inequality (cf. \cite[Lemma 2.31]{CP99}) and the norm 
equivalence between $V^{\varepsilon}$ and $H^{1}\left(\Omega^{\varepsilon}\right)$,
if $u^{\varepsilon}\in\mathcal{V}^{\varepsilon}$ then the result
in Lemma \ref{lem:3} reads
\begin{eqnarray*}
\left\Vert u^{\varepsilon}\right\Vert _{\mathcal{W}^{\infty}\left(\Omega^{\varepsilon}\right)} & \le & C\left(1+\varepsilon^{-1/2}\left\Vert u^{\varepsilon}\right\Vert _{\mathcal{H}\left(\Omega^{\varepsilon}\right)}+\left\Vert u^{\varepsilon}\right\Vert _{\mathcal{V}^{\varepsilon}}\right)\\
 & \le & C\left(1+\varepsilon^{-1/2}\left\Vert u^{\varepsilon}\right\Vert _{\mathcal{V}^{\varepsilon}}\right).
\end{eqnarray*}

\end{rem}

\begin{lem}
\label{lem:5}Assume $\left(\mbox{A}_{1}\right)$-$\left(\mbox{A}_{5}\right)$
and that $\lambda_{1}\left(r_{\infty}^{\varepsilon},f_{\infty}^{\varepsilon}\right)>0$
and $\lambda_{1}\left(r_{0}^{\varepsilon},f_{0}^{\varepsilon}\right)<0$
hold. We define the following functional
\[
J\left[u^{\varepsilon}\right]:=\frac{1}{2}\sum_{i=1}^{N}\int_{\Omega^{\varepsilon}}d_{i}^{\varepsilon}\left|\nabla u_{i}^{\varepsilon}\right|^{2}dx-\sum_{i=1}^{N}\int_{\Omega^{\varepsilon}}\mathcal{R}_{i}\left(x,u^{\varepsilon}\right)dx-\sum_{i=1}^{N}\int_{\Gamma^{\varepsilon}}\mathcal{F}_{i}\left(x,u_{i}^{\varepsilon}\right)dS_{\varepsilon},
\]
where
\[
\mathcal{R}_{i}\left(x,u^{\varepsilon}\right):=\int_{0}^{u_{i}^{\varepsilon}}R_{i}\left(x,u_{1}^{\varepsilon},...s_{i},...,u_{N}^{\varepsilon}\right)ds_{i},
\]
\[
\mathcal{F}_{i}\left(x,u_{i}^{\varepsilon}\right):=\int_{0}^{u_{i}^{\varepsilon}}\left(a_{i}^{\varepsilon}s-b_{i}^{\varepsilon}F_{i}\left(s\right)\right)ds,
\]
and the nonlinear terms are extended to be $R_{i}\left(x,0\right)$
and $F_{i}\left(x,0\right)$ for $u_{i}^{\varepsilon}\le0$. Then
$J$ is coercive on $\mathcal{V}^{\varepsilon}$ and lower semi-continuous
for $\mathcal{V}^{\varepsilon}$. Moreover, there exists $\phi\in\mathcal{V}^{\varepsilon}$
such that $J\left[\phi\right]<0$.\end{lem}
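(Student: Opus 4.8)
\emph{Strategy.} The plan is to follow the energy-minimization method of Brezis and Oswald \cite{BO86}, adapted to the system $\left(P^{\varepsilon}\right)$ and to the perforated domain; the three assertions --- coercivity, weak lower semicontinuity, existence of a point of negative energy --- will be treated separately, the spectral hypotheses entering only the first (via $\lambda_{1}\left(r_{\infty}^{\varepsilon},f_{\infty}^{\varepsilon}\right)>0$) and the third (via $\lambda_{1}\left(r_{0}^{\varepsilon},f_{0}^{\varepsilon}\right)<0$). Since $\varepsilon$ is fixed throughout, the Poincar\'e inequality on $V^{\varepsilon}$ and the trace inequality $\left\Vert v\right\Vert _{L^{2}\left(\Gamma^{\varepsilon}\right)}\le C\left\Vert v\right\Vert _{H^{1}\left(\Omega^{\varepsilon}\right)}$ are available with $\varepsilon$-independent constants (cf. \cite[Lemma 2.1, Lemma 2.31]{CP99}). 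For coercivity, the key input is a pointwise majorization of the primitives coming from the monotonicity in $\left(\mbox{A}_{4}\right)$: since $s\mapsto R_{i}\left(x,\dots,s,\dots\right)/s$ is nonincreasing with limit $r_{\infty,i}^{\varepsilon}$ at $+\infty$ and $s\mapsto a_{i}^{\varepsilon}-b_{i}^{\varepsilon}F_{i}\left(s\right)/s$ is nonincreasing with limit $\hat{f}_{\infty,i}^{\varepsilon}:=\varepsilon^{-1}f_{\infty,i}^{\varepsilon}$, splitting $\int_{0}^{u_{i}^{\varepsilon}}=\int_{0}^{M_{\delta}}+\int_{M_{\delta}}^{u_{i}^{\varepsilon}}$ and estimating the first piece by the growth of $R_{i},F_{i}$ gives, for each $\delta>0$, a constant $C_{\delta}$ with $\mathcal{R}_{i}\left(x,u^{\varepsilon}\right)\le\tfrac{1}{2}\left(r_{\infty,i}^{\varepsilon}+\delta\right)\left|u_{i}^{\varepsilon}\right|^{2}+C_{\delta}$ and $\mathcal{F}_{i}\left(x,u_{i}^{\varepsilon}\right)\le\tfrac{1}{2}\left(\hat{f}_{\infty,i}^{\varepsilon}+\delta\right)\left|u_{i}^{\varepsilon}\right|^{2}+C_{\delta}$. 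Inserting these into $J$, bounding $\int_{\Omega^{\varepsilon}}d_{i}^{\varepsilon}\left|\nabla u_{i}^{\varepsilon}\right|^{2}\ge\alpha\int_{\Omega^{\varepsilon}}\left|\nabla u_{i}^{\varepsilon}\right|^{2}$ after first setting aside a fixed fraction $\theta\in\left(0,1\right)$ of the Dirichlet energy, and applying the variational characterization \eqref{eq:princi-eigen} of $\lambda_{1}\left(r_{\infty}^{\varepsilon},f_{\infty}^{\varepsilon}\right)>0$ to the remaining part (here the weights $\alpha$ and the factor $N$ in \eqref{eq:princi-eigen} are used to redistribute the sum over $i$), I expect to arrive at $J\left[u^{\varepsilon}\right]\ge\tfrac{\theta\alpha}{2}\left\Vert u^{\varepsilon}\right\Vert _{\mathcal{V}^{\varepsilon}}^{2}-C\delta\left(\left\Vert u^{\varepsilon}\right\Vert _{\mathcal{H}\left(\Omega^{\varepsilon}\right)}^{2}+\left\Vert u^{\varepsilon}\right\Vert _{\mathcal{H}\left(\Gamma^{\varepsilon}\right)}^{2}\right)-C_{\delta}$; by Poincar\'e and the trace inequality the middle term is $\le C\delta\left\Vert u^{\varepsilon}\right\Vert _{\mathcal{V}^{\varepsilon}}^{2}$, so choosing $\delta$ small yields $J\left[u^{\varepsilon}\right]\ge c\left\Vert u^{\varepsilon}\right\Vert _{\mathcal{V}^{\varepsilon}}^{2}-C_{\delta}\to+\infty$.

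\emph{Weak lower semicontinuity.} I would split $J=J_{1}+J_{2}+J_{3}$ with $J_{1}\left[u^{\varepsilon}\right]=\tfrac{1}{2}\sum_{i}\int_{\Omega^{\varepsilon}}d_{i}^{\varepsilon}\left|\nabla u_{i}^{\varepsilon}\right|^{2}$ convex and strongly continuous on $\mathcal{V}^{\varepsilon}$, hence sequentially weakly lower semicontinuous. The terms $J_{2}=-\sum_{i}\int_{\Omega^{\varepsilon}}\mathcal{R}_{i}$ and $J_{3}=-\sum_{i}\int_{\Gamma^{\varepsilon}}\mathcal{F}_{i}$ are in fact weakly \emph{continuous}: if $u_{n}^{\varepsilon}\rightharpoonup u^{\varepsilon}$ in $\mathcal{V}^{\varepsilon}$, then the compact embedding $\mathcal{V}^{\varepsilon}\hookrightarrow\hookrightarrow\mathcal{H}\left(\Omega^{\varepsilon}\right)$ and the compact trace $V^{\varepsilon}\hookrightarrow\hookrightarrow L^{2}\left(\Gamma^{\varepsilon}\right)$, together with $H^{1}\left(\Omega^{\varepsilon}\right)\hookrightarrow L^{q}\left(\Omega^{\varepsilon}\right)$ for $q<2_{\Omega^{\varepsilon}}^{*}$, give strong convergence in the relevant $L^{q}$ and $L^{2}\left(\Gamma^{\varepsilon}\right)$ norms and a.e. convergence along a subsequence; since $\mathcal{R}_{i}$ is Carath\'eodory with at most cubic growth in $u^{\varepsilon}$ (the primitive of the bilinear bound in $\left(\mbox{A}_{4}\right)$) and $\mathcal{F}_{i}$ with at most quadratic growth, splitting $\int\left(\mathcal{R}_{i}\left(x,u_{n}^{\varepsilon}\right)-\mathcal{R}_{i}\left(x,u^{\varepsilon}\right)\right)$ by H\"older into factors controlled in these norms lets one pass to the limit --- legitimate because $d\in\left\{ 2,3\right\}$, so $L^{3}$-convergence already handles the cubic term --- and similarly for $J_{3}$. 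Thus $J$ is sequentially weakly lower semicontinuous on $\mathcal{V}^{\varepsilon}$.

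\emph{A point of negative energy.} Since $\lambda_{1}\left(r_{0}^{\varepsilon},f_{0}^{\varepsilon}\right)<0$, the characterization \eqref{eq:princi-eigen} provides $\psi\in\mathcal{V}^{\varepsilon}$, $\psi\ge0$ componentwise and not identically zero, whose Rayleigh numerator is negative; choosing such a $\psi$ (with a single nonzero component if convenient, and again exploiting the weights of \eqref{eq:princi-eigen}) one can arrange that the quadratic form $\Xi:=\sum_{i}\int_{\Omega^{\varepsilon}}d_{i}^{\varepsilon}\left|\nabla\psi_{i}\right|^{2}-\sum_{i}\int_{\Omega^{\varepsilon}}r_{0,i}^{\varepsilon}\left|\psi_{i}\right|^{2}-\sum_{i}\int_{\Gamma^{\varepsilon}}\hat{f}_{0,i}^{\varepsilon}\left|\psi_{i}\right|^{2}$ (with $\hat{f}_{0,i}^{\varepsilon}:=\varepsilon^{-1}f_{0,i}^{\varepsilon}$) that appears when $J$ is expanded is also negative. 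Then I would test with $\phi=t\psi$, $t>0$ small, and use $R_{i}\left(x,s\right)/s\to r_{0,i}^{\varepsilon}$ and $a_{i}^{\varepsilon}-b_{i}^{\varepsilon}F_{i}\left(s\right)/s\to\hat{f}_{0,i}^{\varepsilon}$ as $s\to0^{+}$ to expand, by dominated convergence, $\mathcal{R}_{i}\left(x,t\psi\right)=\tfrac{t^{2}}{2}r_{0,i}^{\varepsilon}\left|\psi_{i}\right|^{2}+o\left(t^{2}\right)$ and $\mathcal{F}_{i}\left(x,t\psi_{i}\right)=\tfrac{t^{2}}{2}\hat{f}_{0,i}^{\varepsilon}\left|\psi_{i}\right|^{2}+o\left(t^{2}\right)$; hence $J\left[t\psi\right]=\tfrac{t^{2}}{2}\Xi+o\left(t^{2}\right)<0$ for $t$ small enough, and this $\phi=t\psi$ is the required function.

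\emph{Main obstacle.} The coercivity step is where I expect the difficulty. First, the majorizations of $\mathcal{R}_{i}$ and $\mathcal{F}_{i}$ must be made rigorous when $r_{\infty,i}^{\varepsilon}$ and $\hat{f}_{\infty,i}^{\varepsilon}$ are merely measurable and the difference quotients converge only monotonically and pointwise: extracting a uniform threshold $M_{\delta}$ (or arguing directly with measurable majorants) requires a Dini-type or measurable-selection argument, in which the $Y$-periodicity of the data is helpful. Second, the quadratic form actually present in $J$ carries the coefficients $d_{i}^{\varepsilon}$ and no factor $N$, while \eqref{eq:princi-eigen} uses $\alpha=\min_{i}\alpha_{i}$ and the factor $N$; matching the two so that a strictly positive multiple of $\left\Vert \nabla u^{\varepsilon}\right\Vert ^{2}$ survives after the reaction contributions are subtracted is the delicate bookkeeping, and it is exactly what the weights in the definition of $\lambda_{1}$ are designed for. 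The weak lower semicontinuity and the near-zero expansion should then be routine, given the growth conditions and the compact embeddings valid for $d\in\left\{ 2,3\right\}$.
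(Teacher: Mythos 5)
Your treatment of the lower semicontinuity and of the negative-energy point $\phi=t\psi$ is essentially sound and close in spirit to the paper's Steps 2 and 3 (the paper uses one-sided Fatou-type inequalities where you invoke dominated convergence, and it shares the same bookkeeping looseness about the weights $\alpha$, $N$ in \eqref{eq:princi-eigen} versus $d_i^\varepsilon$, so I would not hold that against you). The problem is the coercivity step, which is the heart of the lemma, and there your plan rests on two claims that you yourself flag as the ``main obstacle'' and do not resolve. First, the majorization $\mathcal{R}_i\left(x,u^{\varepsilon}\right)\le\tfrac12\left(r_{\infty,i}^{\varepsilon}+\delta\right)\left|u_i^{\varepsilon}\right|^2+C_\delta$ with a single (or even integrable) constant $C_\delta$ does not follow from $\left(\mbox{A}_4\right)$--$\left(\mbox{A}_5\right)$: the monotone quotient converges to $r_{\infty,i}^{\varepsilon}\left(x\right)$ only pointwise in $x$, the threshold $M_\delta$ is $x$-dependent (and depends on the other components $u_j$, $j\ne i$, as well), and the appeal to $Y$-periodicity does not help because $R_i$ and $F_i$ are not assumed periodic in $x$ --- only $d_i,a_i,b_i$ are. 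Second, even granting that majorization, ``setting aside a fraction $\theta$ of the Dirichlet energy and applying \eqref{eq:princi-eigen}'' is not licensed by the hypothesis: $\lambda_1\left(r_{\infty}^{\varepsilon},f_{\infty}^{\varepsilon}\right)>0$ controls the form with the full coefficient $\alpha$, and it only yields a lower bound by $\lambda_1\left\Vert u^{\varepsilon}\right\Vert_{\mathcal H\left(\Omega^{\varepsilon}\right)}^2$, not by a positive multiple of the gradient norm. To keep $\theta\alpha\left\Vert\nabla u^{\varepsilon}\right\Vert^2$ you would need the perturbed eigenvalue (gradient weight $\left(1-\theta\right)\alpha$, potentials shifted by $\delta$) to remain positive; that requires an extra argument (e.g.\ concavity/interpolation in $\theta$, which in turn needs essential upper bounds on $r_{\infty}^{\varepsilon},f_{\infty}^{\varepsilon}$ and a trace absorption), none of which is supplied and none of which is guaranteed by the one-sided boundedness allowed in the definition of $\lambda_1$.

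The paper avoids both difficulties by proving coercivity by contradiction, in the Brezis--Oswald manner: assuming $\left\Vert u^{\varepsilon,m}\right\Vert_{\mathcal V^{\varepsilon}}\to\infty$ with $J\left[u^{\varepsilon,m}\right]\le C$, it normalizes by the bulk or boundary $L^2$ norms (two cases, depending on whether $\sum_i t_{i,m}^2$ diverges), extracts a weak limit $w$ with strong convergence in $L^2\left(\Omega^{\varepsilon}\right)$ and $L^2\left(\Gamma^{\varepsilon}\right)$ and $\sum_i\int_{\Omega^{\varepsilon}}\left|w_i\right|^2\ge N^{-1}$, and passes to the limit in the normalized energy inequality using only the pointwise information $\limsup_{u_i^{\varepsilon}\to\infty}\mathcal R_i\left(x,u^{\varepsilon}\right)/\left|u_i^{\varepsilon}\right|^2\le\tfrac12 r_{\infty,i}^{\varepsilon}\left(x\right)$ together with Fatou; then $\lambda_1\left(r_{\infty}^{\varepsilon},f_{\infty}^{\varepsilon}\right)>0$ forces $w\equiv0$, contradicting the normalization. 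This route needs no uniform threshold and no perturbed eigenvalue, which is precisely why it is the standard one here; to repair your proof you should either switch to this contradiction argument or supply the missing uniform majorization and the stability of $\lambda_1$ under the $\left(\theta,\delta\right)$-perturbation under explicitly stated additional hypotheses.
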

\begin{proof}
\textbf{\uline{Step 1:}} (Coerciveness)

Suppose, by contradiction, that it exists a sequence $\left\{ u^{\varepsilon,m}\right\} \subset\mathcal{V}^{\varepsilon}$
such that $\left\Vert u^{\varepsilon,m}\right\Vert _{\mathcal{V}^{\varepsilon}}\to\infty$
while $J\left[u^{\varepsilon,m}\right]\le C$. Setting
\begin{equation}
s_{i,m}=\left(\int_{\Gamma^{\varepsilon}}\left|u_{i}^{\varepsilon,m}\right|^{2}dS_{\varepsilon}\right)^{1/2},\quad t_{i,m}=\left(\int_{\Omega^{\varepsilon}}\left|u_{i}^{\varepsilon,m}\right|^{2}dx\right)^{1/2},\label{eq:3.3}
\end{equation}
we say that $\sum_{i=1}^{N}t_{i,m}^{2}\to\infty$ up to a subsequence
as $m\to\infty$. Indeed, the assumption $J\left[u^{\varepsilon,m}\right]\le C$
yields that 
\begin{equation}
\frac{1}{2}\sum_{i=1}^{N}\int_{\Omega^{\varepsilon}}d_{i}^{\varepsilon}\left|\nabla u_{i}^{\varepsilon,m}\right|^{2}dx\le\sum_{i=1}^{N}\int_{\Omega^{\varepsilon}}\mathcal{R}_{i}\left(x,u^{\varepsilon,m}\right)dx+\sum_{i=1}^{N}\int_{\Gamma^{\varepsilon}}\mathcal{F}_{i}\left(x,u_{i}^{\varepsilon,m}\right)dS_{\varepsilon}+C,\label{eq:3.4-2}
\end{equation}
which, in combination with \eqref{eq:3.3} and $\left(\mbox{A}_{4}\right)$,
leads to
\begin{equation}
\frac{1}{2}\sum_{i=1}^{N}\int_{\Omega^{\varepsilon}}d_{i}^{\varepsilon}\left|\nabla u_{i}^{\varepsilon,m}\right|^{2}dx\le C\left(N\right)\left(1+\sum_{i=1}^{N}t_{i,m}^{2}+\sum_{i=1}^{N}s_{i,m}^{2}\right).\label{eq:3.4-1}
\end{equation}

Here, if $\sum_{i=1}^{N}t_{i,m}^{2}$ is convergent, then $\sum_{i=1}^{N}s_{i,m}^{2}$
cannot be bounded. While putting
\[
v_{i,m}=u_{i}^{\varepsilon,m}/{\displaystyle \sum_{i=1}^{N}s_{i,m}},
\]
it enables us to derive that
\begin{equation}
\sum_{i=1}^{N}\int_{\Omega^{\varepsilon}}\left|\nabla v_{i,m}\right|^{2}dx=\frac{{\displaystyle \sum_{i=1}^{N}\int_{\Omega^{\varepsilon}}\left|\nabla u_{i}^{\varepsilon,m}\right|^{2}}dx}{{\displaystyle \left(\sum_{i=1}^{N}s_{i,m}\right)^{2}}}\le\frac{{\displaystyle \sum_{i=1}^{N}\int_{\Omega^{\varepsilon}}\left|\nabla u_{i}^{\varepsilon,m}\right|^{2}dx}}{{\displaystyle \sum_{i=1}^{N}s_{i,m}^{2}}}.\label{eq:3.4}
\end{equation}

If we assign $\alpha:=\min\left\{ \alpha_{1},...,\alpha_{N}\right\} >0$,
then it follows from \eqref{eq:3.4-1} and \eqref{eq:3.4} that 
\begin{eqnarray*}
\frac{\alpha}{2}\sum_{i=1}^{N}\int_{\Omega^{\varepsilon}}\left|\nabla v_{i,m}\right|^{2}dx & \le & \frac{{\displaystyle \sum_{i=1}^{N}\int_{\Omega^{\varepsilon}}d_{i}^{\varepsilon}\left|\nabla u_{i}^{\varepsilon,m}\right|^{2}dx}}{2{\displaystyle \sum_{i=1}^{N}s_{i,m}^{2}}}\\
 & \le & C\left(N\right)\left(1+\frac{{\displaystyle \sum_{i=1}^{N}t_{i,m}^{2}}}{{\displaystyle \sum_{i=1}^{N}s_{i,m}^{2}}}+\frac{1}{{\displaystyle \sum_{i=1}^{N}s_{i,m}^{2}}}\right)\le C\left(N\right).
\end{eqnarray*}

Now, we claim that there exists $v_{i}\in V^{\varepsilon}$ such that
$v_{i,m}\rightharpoonup v_{i}$ weakly in $V^{\varepsilon}$, and
then strongly in $L^{2}\left(\Omega^{\varepsilon}\right)$ and in
$L^{2}\left(\Gamma^{\varepsilon}\right)$. However, it implies here
a contradiction. It is because  we have $v_{i}\equiv0$ in $\Omega^{\varepsilon}$
for all $i=\overline{1,N}$ while
\[
\sum_{i=1}^{N}\int_{\Gamma^{\varepsilon}}\left|v_{i}\right|^{2}dS_{\varepsilon}=\left(\sum_{i=1}^{N}s_{i}\right)^{-2}\sum_{i=1}^{N}\int_{\Gamma^{\varepsilon}}\left|u_{i}^{\varepsilon}\right|^{2}dS_{\varepsilon}\ge N^{-1}>0.
\]

Let us now assume that $\sum_{i=1}^{N}t_{i,m}^{2}$ is divergent.
By putting
\[
w_{i,m}=u_{i}^{\varepsilon,m}/{\displaystyle \sum_{i=1}^{N}t_{i,m}},
\]
we have, in the same manner, that
\[
\frac{\alpha}{2}\sum_{i=1}^{N}\int_{\Omega^{\varepsilon}}\left|\nabla w_{i,m}\right|^{2}dx\le C\left(N\right)\left(1+\frac{1}{{\displaystyle \sum_{i=1}^{N}t_{i,m}^{2}}}+\frac{{\displaystyle \sum_{i=1}^{N}s_{i,m}^{2}}}{{\displaystyle \sum_{i=1}^{N}t_{i,m}^{2}}}\right).
\]

From \eqref{eq:3.3}, we know that
\begin{equation}
\sum_{i=1}^{N}\int_{\Omega^{\varepsilon}}\left|w_{i,m}\right|^{2}dx=\left(\sum_{i=1}^{N}t_{i,m}\right)^{-2}\sum_{i=1}^{N}\int_{\Omega^{\varepsilon}}\left|u_{i}^{\varepsilon,m}\right|^{2}dx\le1,\label{eq:3.6}
\end{equation}
and
\begin{equation}
\sum_{i=1}^{N}\int_{\Gamma^{\varepsilon}}\left|w_{i,m}\right|^{2}dS_{\varepsilon}\ge N^{-1}\left(\sum_{i=1}^{N}t_{i,m}^{2}\right)^{-1}\sum_{i=1}^{N}\int_{\Gamma^{\varepsilon}}\left|u_{i}^{\varepsilon,m}\right|^{2}dS_{\varepsilon}\ge\frac{{\displaystyle \sum_{i=1}^{N}s_{i,m}^{2}}}{N{\displaystyle \sum_{i=1}^{N}t_{i,m}^{2}}}.\label{eq:3.7}
\end{equation}

Combining the trace inequality (cf. \cite[Lemma 2.31]{CP99}) with
\eqref{eq:3.6} and \eqref{eq:3.7}, we obtain
\begin{eqnarray*}
\frac{{\displaystyle \sum_{i=1}^{N}s_{i,m}^{2}}}{{\displaystyle \sum_{i=1}^{N}t_{i,m}^{2}}} & \le & N\sum_{i=1}^{N}\int_{\Gamma^{\varepsilon}}\left|w_{i,m}\right|^{2}dS_{\varepsilon}\\
 & \le & CN\left(2\sum_{i=1}^{N}\left(\int_{\Omega^{\varepsilon}}\left|w_{i,m}\right|^{2}dx\right)^{1/2}\left(\int_{\Omega^{\varepsilon}}\left|\nabla w_{i,m}\right|^{2}dx\right)^{1/2}+\varepsilon^{-1}\sum_{i=1}^{N}\int_{\Omega^{\varepsilon}}\left|w_{i,m}\right|^{2}dx\right)\\
 & \le & CN\left(2\sum_{i=1}^{N}\left(\int_{\Omega^{\varepsilon}}\left|\nabla w_{i,m}\right|^{2}dx\right)^{1/2}+\varepsilon^{-1}\right).
\end{eqnarray*}

It yields that
\[
\sum_{i=1}^{N}\int_{\Omega^{\varepsilon}}\left|\nabla w_{i,m}\right|^{2}dx\le\frac{2C\left(N\right)}{\alpha}\left[\sum_{i=1}^{N}\left(\int_{\Omega^{\varepsilon}}\left|\nabla w_{i,m}\right|^{2}dx\right)^{1/2}+C\left(\varepsilon\right)+\left(\sum_{i=1}^{N}t_{i,m}^{2}\right)^{-1}\right],
\]
which finally leads to
\begin{equation}
\left|\left(\int_{\Omega^{\varepsilon}}\left|\nabla w_{i,m}\right|^{2}dx\right)^{1/2}-\frac{C\left(N\right)}{\alpha}\right|\le C\left(N,\varepsilon\right)\left(1+\left(\sum_{i=1}^{N}t_{i,m}^{2}\right)^{-1}\right)^{1/2}\;\mbox{for all}\;i=\overline{1,N}.\label{eq:3.9-1}
\end{equation}

Therefore, $\int_{\Omega^{\varepsilon}}\left|\nabla w_{i,m}\right|^{2}dx$
is bounded by the inequality \eqref{eq:3.9-1}. So, up to a subsequence,
$w_{i,m}\rightharpoonup w_{i}$ weakly in $V^{\varepsilon}$, and
then strongly in $L^{2}\left(\Omega^{\varepsilon}\right)$ and $L^{2}\left(\Gamma^{\varepsilon}\right)$.
In addition, it can be proved that $\sum_{i=1}^{N}\int_{\Omega^{\varepsilon}}\left|w_{i}\right|^{2}dx\ge N^{-1}>0$,
and from \eqref{eq:3.4-2}, it gives us that
\begin{equation}
\frac{\alpha}{2}\sum_{i=1}^{N}\int_{\Omega^{\varepsilon}}\left|\nabla w_{i,m}\right|^{2}dx\le\frac{C}{{\displaystyle \sum_{i=1}^{N}t_{i,m}^{2}}}+\sum_{i=1}^{N}\int_{\Omega^{\varepsilon}}\frac{\mathcal{R}_{i}\left(x,u^{\varepsilon,m}\right)}{{\displaystyle \sum_{i=1}^{N}t_{i,m}^{2}}}dx+\sum_{i=1}^{N}\int_{\Gamma^{\varepsilon}}\frac{\mathcal{F}_{i}\left(x,u_{i}^{\varepsilon,m}\right)}{{\displaystyle \sum_{i=1}^{N}t_{i,m}^{2}}}dS_{\varepsilon}.\label{eq:3.9}
\end{equation}

We now consider the second integral on the right-hand side of
the above inequality, then the third one is totally similar. Using
the fact that $w_{i,m}\to w_{i}$ strongly in $L^{2}\left(\Omega^{\varepsilon}\right)$
and the assumptions $\left(\mbox{A}_{4}\right)$-$\left(\mbox{A}_{5}\right)$
in combination with the Fatou lemma, we get
\[
\limsup_{m\to\infty}\sum_{i=1}^{N}\int_{\Omega^{\varepsilon}}\frac{\mathcal{R}_{i}\left(x,u^{\varepsilon,m}\right)}{{\displaystyle \sum_{i=1}^{N}t_{i,m}^{2}}}dx\le\frac{N}{2}\sum_{i=1}^{N}\int_{\Omega^{\varepsilon}\cap\left\{ w>0\right\} }r_{\infty,i}^{\varepsilon}\left|w_{i,m}\right|^{2}dx,
\]
where we have also applied the following inequalities
\[
N^{-1}\left(\sum_{i=1}^{N}t_{i,m}^{2}\right)^{-1}\le\left(\sum_{i=1}^{N}t_{i,m}\right)^{-2}\le\left|w_{i,m}\right|^{2}\left|u_{i}^{\varepsilon,m}\right|^{-2},
\]
\[
\limsup_{u_{i}^{\varepsilon}\to\infty}\frac{\mathcal{R}_{i}\left(x,u^{\varepsilon}\right)}{\left|u_{i}^{\varepsilon}\right|^{2}}\le\frac{1}{2}r_{\infty,i}^{\varepsilon}\left(x\right)\quad\mbox{for}\;\mbox{a.e.}\;x\in\Omega^{\varepsilon}.
\]

Thus, passing to the limit in \eqref{eq:3.9} we are led to
\[
\frac{\alpha}{2}\sum_{i=1}^{N}\int_{\Omega^{\varepsilon}}\left|\nabla w_{i}\right|^{2}dx\le\frac{N}{2}\left(\sum_{i=1}^{N}\int_{\Omega^{\varepsilon}\cap\left\{ w>0\right\} }r_{\infty,i}^{\varepsilon}\left|w_{i}\right|^{2}dx+\sum_{i=1}^{N}\int_{\Gamma^{\varepsilon}\cap\left\{ w>0\right\} }f_{\infty,i}^{\varepsilon}\left|w_{i}\right|^{2}dS_{\varepsilon}\right).
\]

Recall that $\lambda_{1}\left(r_{\infty}^{\varepsilon},f_{\infty}^{\varepsilon}\right)>0$,
it then gives us that $w_{i}^{+}\equiv0$ for all $i=\overline{1,N}$.
As a consequence, $w_{i}\equiv0$ while it contradicts the above result
$\sum_{i=1}^{N}\int_{\Omega^{\varepsilon}}\left|w_{i}\right|^{2}dx\ge N^{-1}$.

Hence, $J$ is coercive.

\textbf{\uline{Step 2:}} (Lower semi-continuity)

It can be proved as in \cite{BO86,GMRL09} that: if  $u^{\varepsilon,m}\rightharpoonup u^{\varepsilon}$
in $\mathcal{V}^{\varepsilon}$, then we obtain
\[
\limsup_{m\to\infty}\int_{\Omega^{\varepsilon}}\mathcal{R}_{i}\left(x,u^{\varepsilon,m}\right)dx\le\int_{\Omega^{\varepsilon}}\mathcal{R}_{i}\left(x,u^{\varepsilon}\right)dx,
\]
\[
\limsup_{m\to\infty}\int_{\Gamma^{\varepsilon}}\mathcal{F}_{i}\left(x,u_{i}^{\varepsilon,m}\right)dS_{\varepsilon}\le\int_{\Gamma^{\varepsilon}}\mathcal{F}_{i}\left(x,u_{i}^{\varepsilon}\right)dS_{\varepsilon},
\]
by using the growth assumptions $\left(\mbox{A}_{4}\right)$ in combination
with the Fatou lemma. Thus, $J$ is lower semi-continuous.

This result tells us that $J$ achieves the global minimum at a function
$u^{\varepsilon}\in\mathcal{V}^{\varepsilon}$. If we replace $u^{\varepsilon}$
by $\left(u^{\varepsilon}\right)^{+}$, $u^{\varepsilon}$ can be
supposed to be non-negative. Moreover, the last step shows that $u^{\varepsilon}$
is non-trivial.

\textbf{\uline{Step 3:}} (Non-triviality of the minimisers)

What we need to prove now is that there exists $\phi\in\mathcal{V}^{\varepsilon}$
such that $J\left[\phi\right]<0$. In fact, given $\psi\in\mathcal{V}^{\varepsilon}\cap\mathcal{W}^{\varepsilon}$
satisfying $\left\Vert \psi\right\Vert _{\mathcal{W}^{\varepsilon}}=1$
and
\[
\alpha\sum_{i=1}^{N}\int_{\Omega^{\varepsilon}}\left|\nabla\psi_{i}\right|^{2}dx<N\sum_{i=1}^{N}\left(\int_{\Omega^{\varepsilon}}r_{0,i}^{\varepsilon}\left|\psi_{i}\right|^{2}dx+\int_{\Gamma^{\varepsilon}}f_{0,i}^{\varepsilon}\left|\psi_{i}\right|^{2}dS_{\varepsilon}\right).
\]

In fact, here we assume that $\psi$ is non-negative. By the assumptions
$\left(\mbox{A}_{4}\right)$-$\left(\mbox{A}_{5}\right)$, we have
\[
\liminf_{\delta\to0^{+}}\frac{\mathcal{R}_{i}\left(x,\delta\psi\right)}{\delta^{2}}\ge\frac{1}{2}r_{0,i}^{\varepsilon}\left(x\right)\left|\psi\right|^{2}\ge\frac{1}{2}r_{0,i}^{\varepsilon}\left(x\right)\left|\psi_{i}\right|^{2}\quad\mbox{for}\;\mbox{a.e.}\;x\in\Omega^{\varepsilon},
\]
and
\[
\liminf_{\delta\to0^{+}}\frac{\mathcal{F}_{i}\left(x,\delta\psi_{i}\right)}{\delta^{2}}\ge\frac{1}{2}f_{0,i}^{\varepsilon}\left(x\right)\left|\psi_{i}\right|^{2}\quad\mbox{for}\;\mbox{a.e.}\;x\in\Gamma^{\varepsilon}.
\]

This coupling with the Fatou lemma enable us to obtain the following
\[
\sum_{i=1}^{N}\left(\liminf_{\delta\to0^{+}}\int_{\Omega^{\varepsilon}}\frac{\mathcal{R}_{i}\left(x,\delta\psi\right)}{\delta^{2}}dx+\liminf_{\delta\to0^{+}}\frac{\mathcal{F}_{i}\left(x,\delta\psi_{i}\right)}{\delta^{2}}\right)
\]
\[
\ge\frac{1}{2}\sum_{i=1}^{N}\left(\int_{\Omega^{\varepsilon}}r_{0,i}^{\varepsilon}\left|\psi_{i}\right|^{2}dx+\int_{\Gamma^{\varepsilon}}f_{0,i}^{\varepsilon}\left|\psi_{i}\right|^{2}dS_{\varepsilon}\right),
\]
which leads to
\[
\limsup_{\delta\to0^{+}}\frac{J\left[\delta\psi\right]}{\delta^{2}}<0.
\]

Hence, to complete the proof, we need to choose $\phi=\delta\psi$.\end{proof}
\begin{thm}
\label{thm:6}Assume $\left(\mbox{A}_{1}\right)$-$\left(\mbox{A}_{5}\right)$
and $\lambda_{1}\left(r_{\infty}^{\varepsilon},f_{\infty}^{\varepsilon}\right)>0$,
$\lambda_{1}\left(r_{0}^{\varepsilon},f_{0}^{\varepsilon}\right)<0$
hold. Then $\left(P^{\varepsilon}\right)$ admits at least a non-negative
weak solution $u^{\varepsilon}\in\mathcal{V}^{\varepsilon}\cap\mathcal{W}^{\infty}\left(\Omega^{\varepsilon}\right)$.\end{thm}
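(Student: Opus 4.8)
The natural plan is the direct method of the calculus of variations applied to the functional $J$ from Lemma~\ref{lem:5}. By that lemma, $J$ is coercive and (sequentially weakly) lower semi-continuous on the reflexive space $\mathcal{V}^{\varepsilon}$, so a minimising sequence $\{u^{\varepsilon,m}\}\subset\mathcal{V}^{\varepsilon}$ is bounded, admits a subsequence with $u^{\varepsilon,m}\rightharpoonup u^{\varepsilon}$ in $\mathcal{V}^{\varepsilon}$, and weak lower semi-continuity gives
\[
J[u^{\varepsilon}]=\inf_{\mathcal{V}^{\varepsilon}}J=:m_{0}.
\]
Since $J[0]=0$ while Lemma~\ref{lem:5} furnishes $\phi\in\mathcal{V}^{\varepsilon}$ with $J[\phi]<0$, we have $m_{0}<0$, so in particular the minimiser is non-trivial.

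Next I would arrange that the minimiser is non-negative. Recall that $R_{i}$ and $F_{i}$ were extended below zero by the constants $R_{i}(x,0)$, $F_{i}(x,0)$, which makes the partial primitives $\mathcal{R}_{i}(x,\cdot)$ and $\mathcal{F}_{i}(x,\cdot)$ non-decreasing when a negative component is replaced by $0$, whereas the Dirichlet term can only decrease under truncation; hence $(u^{\varepsilon})^{+}\in\mathcal{V}^{\varepsilon}$ satisfies $J[(u^{\varepsilon})^{+}]\le J[u^{\varepsilon}]=m_{0}$, so it is again a global minimiser. We may therefore assume $u_{i}^{\varepsilon}\ge 0$ for every $i$, and it is still non-trivial because $m_{0}<0$.

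It remains to identify the minimiser as a weak solution and to upgrade its integrability. Using the Carath\'eodory property $(\mathrm{A}_{3})$, the growth hypotheses on $R_{i}$ and $F_{i}$ (in $(\mathrm{A}_{4})$, resp. $(\mathrm{A}_{6})$), the Sobolev and trace embeddings of $\mathcal{V}^{\varepsilon}$ into $\mathcal{W}^{q}(\Omega^{\varepsilon})$ and $\mathcal{W}^{q}(\Gamma^{\varepsilon})$, and dominated convergence, one shows that $J$ is G\^ateaux differentiable on $\mathcal{V}^{\varepsilon}$ and that its derivative at $u^{\varepsilon}$ in a direction $\varphi\in\mathcal{V}^{\varepsilon}$ coincides with the left-hand side of \eqref{eq:weaksol}; since $u^{\varepsilon}$ is a global (hence interior, $\mathcal{V}^{\varepsilon}$ being a linear space) minimiser, $\langle J'[u^{\varepsilon}],\varphi\rangle=0$ for all $\varphi\in\mathcal{V}^{\varepsilon}$, which is precisely \eqref{eq:weaksol}. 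Finally, as $u^{\varepsilon}\in\mathcal{V}^{\varepsilon}$ the trace inequality gives $u^{\varepsilon}\in\mathcal{H}(\Gamma^{\varepsilon})$, so Lemma~\ref{lem:3} applies and yields $u^{\varepsilon}\in\mathcal{W}^{\infty}(\Omega^{\varepsilon})$ with the stated quantitative bound, completing the proof.

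The step I expect to be the main obstacle is the G\^ateaux differentiation and the ensuing identification of the Euler--Lagrange system with \eqref{eq:weaksol}. Because each $\mathcal{R}_{i}$ is a primitive only in its $i$-th slot, with the remaining components frozen, the computation of $\tfrac{d}{dt}\big|_{0}\sum_{i}\int_{\Omega^{\varepsilon}}\mathcal{R}_{i}(x,u^{\varepsilon}+t\varphi)\,dx$ must handle the contributions from differentiating $\mathcal{R}_{i}$ in the $k$-th variable for $k\ne i$, and here the structural monotonicity of $p_{i}\mapsto R_{i}(p)/p_{i}$ in $(\mathrm{A}_{4})$ together with the quadratic growth $(\mathrm{A}_{6})$ is what permits both the dominated-convergence passage to the limit and the verification that the reaction part of $\langle J'[u^{\varepsilon}],\varphi\rangle$ assembles to $\sum_{i}\int_{\Omega^{\varepsilon}}R_{i}(u^{\varepsilon})\varphi_{i}\,dx$ as required; the boundary term is handled analogously, using the trace embedding to control $\int_{\Gamma^{\varepsilon}}$.
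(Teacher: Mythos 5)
Your first two steps (direct method on $J$, non-triviality from $J[\phi]<0$, passage to $(u^{\varepsilon})^{+}$) match the paper. The genuine gap is in the last part, where you invoke the growth hypotheses ``in $(\mathrm{A}_{4})$, resp.\ $(\mathrm{A}_{6})$'' to differentiate $J$ and then apply Lemma \ref{lem:3}: Theorem \ref{thm:6} assumes only $(\mathrm{A}_{1})$--$(\mathrm{A}_{5})$, and Lemma \ref{lem:3} is proved under $(\mathrm{A}_{6})$ \emph{in place of} $(\mathrm{A}_{4})$. Under $(\mathrm{A}_{4})$ alone you have only one-sided bounds ($R_{i}(p)\le C(1+\sum_{j\ne i}p_{i}p_{j})$, $F_{i}(p_{i})\le C(1+p_{i})$), with no control of $R_{i}$, $F_{i}$ from below; so neither the dominated-convergence argument behind your G\^ateaux differentiation nor the Moser iteration of Lemma \ref{lem:3} is licensed for the original nonlinearities. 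This is precisely the Brezis--Oswald difficulty, and it is why the paper does not argue as you do.

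The paper's proof instead introduces the truncated problems $(P^{k,\varepsilon})$ with $R_{i}^{k}(u^{\varepsilon})=\max\{-ku_{i}^{\varepsilon},R_{i}(u^{\varepsilon})\}$ and the analogous $G_{i}^{k}$, observes that these \emph{do} satisfy both $(\mathrm{A}_{4})$ and $(\mathrm{A}_{6})$ (and that the eigenvalue conditions persist for large $k$), so Lemma \ref{lem:5} gives a non-negative, non-trivial minimiser $u^{k,\varepsilon}$ and Lemma \ref{lem:3} gives $u^{k,\varepsilon}\in\mathcal{W}^{\infty}(\Omega^{\varepsilon})$. Boundedness (and solvability) for the original problem is then transferred by a comparison argument: testing the minimality of $u^{k,\varepsilon}$ for $J^{k}$ against $\phi_{i}=\max\{u_{i}^{\varepsilon},u_{i}^{k,\varepsilon}\}$ yields \eqref{eq:3.18}--\eqref{eq:3.21}, whence $J[v^{\varepsilon}]\le J[u^{\varepsilon}]$ for $v_{i}^{\varepsilon}=\min\{u_{i}^{\varepsilon},u_{i}^{k,\varepsilon}\}$, so the global minimiser of $J$ can be taken bounded by $u^{k,\varepsilon}$ and is then identified as a weak solution in $\mathcal{V}^{\varepsilon}\cap\mathcal{W}^{\infty}(\Omega^{\varepsilon})$. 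To repair your argument you would either have to add $(\mathrm{A}_{6})$ (and a matching lower bound making your differentiation legitimate) to the hypotheses, or reproduce some truncation/comparison device of this kind; as written, the step ``Lemma \ref{lem:3} applies'' does not follow from the stated assumptions.
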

\begin{proof}
We begin the proof by introducing the approximate system
\[
\left(P^{k,\varepsilon}\right):\;\begin{cases}
\nabla\cdot\left(-d_{i}^{\varepsilon}\nabla u_{i}^{\varepsilon}\right)=R_{i}^{k}\left(u^{\varepsilon}\right), & \mbox{in}\;\Omega^{\varepsilon}\subset\mathbb{R}^{d},\\
d_{i}^{\varepsilon}\nabla u_{i}^{\varepsilon}\cdot\mbox{n}=G_{i}^{k}\left(u_{i}^{\varepsilon}\right), & \mbox{on}\;\Gamma^{\varepsilon},\\
u_{i}^{\varepsilon}=0, & \mbox{on}\;\Gamma^{ext},
\end{cases}
\]
in which we have defined that for each integer $k>0$ the truncated reaction rates
\[
R_{i}^{k}\left(u^{\varepsilon}\right):=\begin{cases}
\max\left\{ -ku_{i}^{\varepsilon},R_{i}\left(u^{\varepsilon}\right)\right\} , & \mbox{if}\;u_{i}^{\varepsilon}\ge0,\\
R_{i}\left(0\right), & \mbox{if}\;u_{i}^{\varepsilon}<0,
\end{cases}
\]
and
\[
G_{i}^{k}\left(u_{i}^{\varepsilon}\right):=\begin{cases}
\varepsilon\max\left\{ -ku_{i}^{\varepsilon},a_{i}^{\varepsilon}u_{i}^{\varepsilon}-b_{i}^{\varepsilon}F_{i}\left(u_{i}^{\varepsilon}\right)\right\} , & \mbox{if}\;u_{i}^{\varepsilon}\ge0,\\
-\varepsilon b_{i}^{\varepsilon}F_{i}\left(0\right), & \mbox{if}\;u_{i}^{\varepsilon}<0.
\end{cases}
\]

It is easy to check that our truncated functions $R_{i}^{k}$ and
$G_{i}^{k}$ fulfill both $\left(\mbox{A}_{4}\right)$ and $\left(\mbox{A}_{6}\right)$.
In addition, if we set elements $R_{0,i}^{k},R_{\infty,i}^{k},G_{0,i}^{k},G_{\infty,i}^{k}$
as functions in $\left(\mbox{A}_{5}\right)$ by $R_{i}^{k}$ and $G_{i}^{k}$,
one may prove that
\[
r_{0,i}^{\varepsilon}\le R_{0,i}^{k},\;r_{\infty,i}^{\varepsilon}\le R_{\infty,i}^{k},\;f_{0,i}^{\varepsilon}\le G_{0,i}^{k},\;f_{\infty,i}^{\varepsilon}\le G_{\infty,i}^{k}\quad\mbox{for all}\;i\in\left\{ 1,...,N\right\} ,
\]
and $\lambda_{1}\left(R_{0}^{k},G_{0}^{k}\right)<0$ and $\lambda_{1}\left(R_{\infty}^{k},G_{\infty}^{k}\right)>0$
for $k$ large (see, e.g. \cite{GMRL09}).

Thanks to Lemma \ref{lem:5}, the problem $\left(P^{k,\varepsilon}\right)$
admits a global non-trivial and non-negative minimizer, denoted by
$u^{k,\varepsilon}$, which belongs to $\mathcal{V}^{\varepsilon}$
and it is associated with the following functional
\[
J^{k}\left[u^{\varepsilon}\right]:=\frac{1}{2}\sum_{i=1}^{N}\int_{\Omega^{\varepsilon}}d_{i}^{\varepsilon}\left|\nabla u_{i}^{\varepsilon}\right|dx-\sum_{i=1}^{N}\int_{\Omega^{\varepsilon}}\mathcal{R}_{i}^{k}\left(x,u^{\varepsilon}\right)dx-\sum_{i=1}^{N}\int_{\Gamma^{\varepsilon}}\mathcal{F}_{i}^{k}\left(x,u_{i}^{\varepsilon}\right)dS_{\varepsilon}.
\]

Furthermore, $u^{k,\varepsilon}$ defines a weak solution to the problem
$\left(P^{k,\varepsilon}\right)$ for every $k$ and thus, $u^{k,\varepsilon}\in\mathcal{W}^{\infty}\left(\Omega^{\varepsilon}\right)$
by Lemma \ref{lem:3}.

Now, we assign a vector $v^{\varepsilon}$ whose elements are defined
by $v_{i}^{\varepsilon}:=\min\left\{ u_{i}^{\varepsilon},u_{i}^{k,\varepsilon}\right\} $
where $u\in\mathcal{V}^{\varepsilon}$ is the global minimizer constructed
from the functional $J$. We shall prove that $J\left[v^{\varepsilon}\right]\le J\left[u^{\varepsilon}\right]$.
Note that when doing so, $v^{\varepsilon}\in\mathcal{W}^{\infty}\left(\Omega^{\varepsilon}\right)$
and then define a weak solution $u\in\mathcal{V}^{\varepsilon}\cap\mathcal{W}^{\infty}\left(\Omega^{\varepsilon}\right)$
to $\left(P^{\varepsilon}\right)$.

In fact, one has
\[
J^{k}\left[u^{k,\varepsilon}\right]\le J\left[\phi\right]\quad\mbox{for all}\;\phi\in\mathcal{V}^{\varepsilon}.
\]

Then by choosing $\phi$ such that $\phi_{i}:=\max\left\{ u_{i}^{\varepsilon},u_{i}^{k,\varepsilon}\right\} $
we have
\[
\sum_{i=1}^{N}\int_{\left\{ u_{i}^{k,\varepsilon}<u_{i}^{\varepsilon}\right\} \cap\Omega^{\varepsilon}}\left(\frac{1}{2}d_{i}^{\varepsilon}\left|\nabla u_{i}^{k,\varepsilon}\right|^{2}-\mathcal{R}_{i}^{k}\left(x,u^{k,\varepsilon}\right)\right)dx-\sum_{i=1}^{N}\int_{\left\{ u_{i}^{k,\varepsilon}<u_{i}^{\varepsilon}\right\} \cap\Gamma^{\varepsilon}}\mathcal{F}_{i}^{k}\left(x,u_{i}^{k,\varepsilon}\right)dS_{\varepsilon}
\]
\begin{equation}
\le\sum_{i=1}^{N}\int_{\left\{ u_{i}^{k,\varepsilon}<u_{i}^{\varepsilon}\right\} \cap\Omega^{\varepsilon}}\left(\frac{1}{2}d_{i}^{\varepsilon}\left|\nabla u_{i}^{\varepsilon}\right|^{2}-\mathcal{R}_{i}^{k}\left(x,u^{\varepsilon}\right)\right)dx-\sum_{i=1}^{N}\int_{\left\{ u_{i}^{k,\varepsilon}<u_{i}^{\varepsilon}\right\} \cap\Gamma^{\varepsilon}}\mathcal{F}_{i}^{k}\left(x,u_{i}^{\varepsilon}\right)dS_{\varepsilon}.\label{eq:3.18}
\end{equation}

In addition, by the choice of $J$ (see in Lemma \ref{lem:5}) we deduce that
\begin{eqnarray}
J\left[v^{\varepsilon}\right]-J\left[u^{\varepsilon}\right] & = & \sum_{i=1}^{N}\int_{\left\{ u_{i}^{k,\varepsilon}<u_{i}^{\varepsilon}\right\} \cap\Omega^{\varepsilon}}\frac{1}{2}d_{i}^{\varepsilon}\left(\left|\nabla u_{i}^{k,\varepsilon}\right|^{2}-\left|\nabla u_{i}^{\varepsilon}\right|\right)dx\nonumber \\
 &  & -\sum_{i=1}^{N}\int_{\left\{ u_{i}^{k,\varepsilon}<u_{i}^{\varepsilon}\right\} \cap\Omega^{\varepsilon}}\left(\mathcal{R}_{i}\left(x,u^{k,\varepsilon}\right)-\mathcal{R}_{i}\left(x,u^{\varepsilon}\right)\right)dx\nonumber \\
 &  & -\sum_{i=1}^{N}\int_{\left\{ u_{i}^{k,\varepsilon}<u_{i}^{\varepsilon}\right\} \cap\Gamma^{\varepsilon}}\left(\mathcal{F}_{i}\left(x,u_{i}^{k,\varepsilon}\right)-\mathcal{F}_{i}\left(x,u_{i}^{\varepsilon}\right)\right)dS_{\varepsilon}.\label{eq:3.19}
\end{eqnarray}

On the other hand, \eqref{eq:3.18} yields
\begin{equation}
\sum_{i=1}^{N}\int_{\left\{ u_{i}^{k,\varepsilon}<u_{i}^{\varepsilon}\right\} \cap\Omega^{\varepsilon}}\left[\mathcal{R}_{i}^{k}\left(x,u^{k,\varepsilon}\right)-\mathcal{R}_{i}^{k}\left(x,u^{\varepsilon}\right)-\left(\mathcal{R}_{i}\left(x,u^{k,\varepsilon}\right)-\mathcal{R}_{i}\left(x,u^{\varepsilon}\right)\right)\right]\le0,\label{eq:3.20}
\end{equation}
and
\begin{equation}
\sum_{i=1}^{N}\int_{\left\{ u_{i}^{k,\varepsilon}<u_{i}^{\varepsilon}\right\} \cap\Gamma^{\varepsilon}}\left[\mathcal{F}_{i}^{k}\left(x,u^{k,\varepsilon}\right)-\mathcal{F}_{i}^{k}\left(x,u^{\varepsilon}\right)-\left(\mathcal{F}_{i}\left(x,u^{k,\varepsilon}\right)-\mathcal{F}_{i}\left(x,u^{\varepsilon}\right)\right)\right]\le0.\label{eq:3.21}
\end{equation}

Hence, combining \eqref{eq:3.18}-\eqref{eq:3.21} we complete the
proof of the lemma. This tells us that under assumptions $\left(\mbox{A}_{1}\right)$-$\left(\mbox{A}_{5}\right)$
the problem $\left(P^{\varepsilon}\right)$ admits a non-negative,
non-trivial and bounded weak vector of solutions $u^{\varepsilon}$
at each $\varepsilon$-level.\end{proof}
\begin{rem}
If $R_{i}\left(u^{\varepsilon}\right)\ge-Mu_{i}^{\varepsilon}$ in
$\Omega^{\varepsilon}$ (or for each subdomain of $\Omega^{\varepsilon}$
if rigorously stated) for some $\varepsilon$-dependent constant $M>0$
and all $i\in\left\{ 1,...,N\right\} $, then $\left(P^{\varepsilon}\right)$
has at least a positive, non-trivial and bounded weak solution $u^{\varepsilon}$
by the Hopf strong maximum principle. Furthermore, one may prove in
the same vein in \cite[Lemma 13]{GMRL09} that the solution is unique
by using vectors of test functions $\varphi_{\delta}^{\varepsilon}$
and $\psi_{\delta}^{\varepsilon}$ whose elements are given by 
\[
\varphi_{\delta,i}^{\varepsilon}=\frac{\left(u_{i}^{\varepsilon}+\delta\right)^{2}-\left(v_{i}^{\varepsilon}+\delta\right)^{2}}{u_{i}^{\varepsilon}+\delta},\;\psi_{\delta,i}^{\varepsilon}=\frac{\left(u_{i}^{\varepsilon}+\delta\right)^{2}-\left(v_{i}^{\varepsilon}+\delta\right)^{2}}{v_{i}^{\varepsilon}+\delta},
\]
where $u_{i}^{\varepsilon}$ and $v_{i}^{\varepsilon}$ are two solutions
of $\left(P^{\varepsilon}\right)$ at each layer $i\in\left\{ 1,...,N\right\} $,
which are expected to equal to each other.
\end{rem}

\begin{rem}
In the case of zero Neumann boundary condition on $\Gamma^{\varepsilon}$,
if the nonlinearity $R_{i}$ is globally Lipschitzi with the Lipschitz
constant, denoted by $L_{i}$, independent of the scale $\varepsilon$
for any $i\in\left\{ 1,...,N\right\} $, then we may use an iterative scheme  to deal with the existence and uniqueness of
solutions to our problem. In fact, for $n\in\mathbb{N}$
such an iterative scheme is given by 
\begin{equation}
\left(P_{n}^{\varepsilon}\right):\;\begin{cases}
\nabla\cdot\left(-d_{i}^{\varepsilon}\nabla u_{i}^{\varepsilon,n+1}\right)=R_{i}\left(u^{\varepsilon,n}\right), & \mbox{in}\;\Omega^{\varepsilon},\\
d_{i}^{\varepsilon}\nabla u_{i}^{\varepsilon,n+1}\cdot\mbox{n}=0, & \mbox{on}\;\Gamma^{\varepsilon},\\
u_{i}^{\varepsilon,n+1}=0, & \mbox{on}\;\Gamma^{ext},
\end{cases}\label{eq:Pn}
\end{equation}
where the starting point is $u^{\varepsilon,0}=0$.

This global Lipschitz assumption is an alternative to $\left(\mbox{A}_{4}\right)$
for $R_{i}$ and it is termed as $\left(\mbox{A}_{4}'\right)$.\end{rem}
\begin{thm}
\label{thm:9}Assume $\left(\mbox{A}_{1}\right)$ and $\left(\mbox{A}_{3}\right)$
hold (without $F_{i}$) and suppose that the nonlinearity $R_{i}$
satisfy $\left(\mbox{A}_{4}'\right)$ replaced by $\left(\mbox{A}_{4}\right)$.
Then, the problem $\left(P^{\varepsilon}\right)$ with zero Neumann
boundary condition on $\Gamma^{\varepsilon}$ has a unique solution
in $\mathcal{V}^{\varepsilon}$ if the constant $\alpha^{-1}\max_{1\le i\le N}\left\{ L_{i}\right\} N$
is small enough.\end{thm}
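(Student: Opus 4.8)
The plan is to turn the zero-Neumann version of $\left(P^{\varepsilon}\right)$ into a fixed-point problem for the linear solution operator behind the iterative scheme $\left(P_{n}^{\varepsilon}\right)$, and to show that this operator is a contraction on $\mathcal{V}^{\varepsilon}$ whenever $\alpha^{-1}\max_{i}\{L_{i}\}N$ is small. The Banach fixed-point theorem then yields existence and uniqueness at once.

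First I would define, for a given $w\in\mathcal{V}^{\varepsilon}$, the element $Tw\in\mathcal{V}^{\varepsilon}$ as the weak solution of the decoupled \emph{linear} system $\nabla\cdot\left(-d_{i}^{\varepsilon}\nabla(Tw)_{i}\right)=R_{i}(w)$ in $\Omega^{\varepsilon}$, with homogeneous Neumann data on $\Gamma^{\varepsilon}$ and zero Dirichlet data on $\Gamma^{ext}$. This is well posed by Lax--Milgram: the bilinear form $\sum_{i}\int_{\Omega^{\varepsilon}}d_{i}^{\varepsilon}\nabla u_{i}\cdot\nabla\varphi_{i}\,dx$ is bounded on $\mathcal{V}^{\varepsilon}$ and, by $\left(\mbox{A}_{1}\right)$ together with the $\varepsilon$-uniform Poincar\'e inequality on $V^{\varepsilon}$ recalled earlier (cf. \cite[Lemma 2.1]{CP99}), coercive with constant $\alpha=\min_{i}\alpha_{i}$; and the right-hand side defines a bounded functional since $\left(\mbox{A}_{3}\right)$ gives $R_{i}(\cdot,0)\in L^{\infty}(\Omega^{\varepsilon})\subset L^{2}(\Omega^{\varepsilon})$, so that $\left(\mbox{A}_{4}'\right)$ yields $\|R_{i}(w)\|_{L^{2}(\Omega^{\varepsilon})}\le\|R_{i}(\cdot,0)\|_{L^{2}(\Omega^{\varepsilon})}+L_{i}\|w\|_{\mathcal{H}(\Omega^{\varepsilon})}<\infty$. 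Because there is no boundary term in the weak formulation (this is exactly where the zero-Neumann hypothesis is used), every weak solution of $\left(P^{\varepsilon}\right)$ in $\mathcal{V}^{\varepsilon}$ is a fixed point of $T$ and conversely, so it suffices to prove $T$ has a unique fixed point.

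Next I would prove the contraction estimate. For $w,\bar{w}\in\mathcal{V}^{\varepsilon}$ put $z:=Tw-T\bar{w}$; then $z_{i}$ solves the linear system with right-hand side $R_{i}(w)-R_{i}(\bar{w})$. Testing with $z_{i}$, summing over $i$, and using $\left(\mbox{A}_{1}\right)$ on the left gives
\[
\alpha\,\|z\|_{\mathcal{V}^{\varepsilon}}^{2}\le\sum_{i=1}^{N}\int_{\Omega^{\varepsilon}}\bigl(R_{i}(w)-R_{i}(\bar{w})\bigr)z_{i}\,dx.
\]
On the right I would insert the global Lipschitz bound $\left(\mbox{A}_{4}'\right)$, apply Cauchy--Schwarz in $\Omega^{\varepsilon}$ and then over the $N$ components (using $\sum_{i}\|z_{i}\|_{L^{2}}\le\sqrt{N}\,\|z\|_{\mathcal{H}(\Omega^{\varepsilon})}$), which bounds the sum by a constant multiple of $N\,(\max_{i}L_{i})\,\|w-\bar{w}\|_{\mathcal{H}(\Omega^{\varepsilon})}\,\|z\|_{\mathcal{H}(\Omega^{\varepsilon})}$, the factor $N$ recording the cross-coupling of the equations. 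Using the Poincar\'e inequality to pass from $\mathcal{H}(\Omega^{\varepsilon})$ to $\mathcal{V}^{\varepsilon}$ and cancelling one power of $\|z\|_{\mathcal{V}^{\varepsilon}}$, I obtain $\|Tw-T\bar{w}\|_{\mathcal{V}^{\varepsilon}}\le\theta\,\|w-\bar{w}\|_{\mathcal{V}^{\varepsilon}}$ with $\theta=C\,\alpha^{-1}(\max_{i}L_{i})\,N$, where $C$ is the ($\varepsilon$-independent) square of the Poincar\'e constant. Hence, if $\alpha^{-1}\max_{i}\{L_{i}\}N$ is small enough then $\theta<1$, $T$ is a contraction on the complete space $\mathcal{V}^{\varepsilon}$, and the Banach fixed-point theorem gives the unique weak solution; equivalently, the iterates $u^{\varepsilon,n}$ produced by $\left(P_{n}^{\varepsilon}\right)$ starting from $u^{\varepsilon,0}=0$ form a Cauchy sequence in $\mathcal{V}^{\varepsilon}$ converging to it.

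The computation is routine; the two points that need genuine care are (i) verifying that every constant entering $\theta$ --- above all the Poincar\'e constant of $V^{\varepsilon}$ --- is independent of $\varepsilon$, which is what makes the smallness condition meaningful uniformly in the microstructure, and (ii) keeping honest track of the $N$-dependence generated by the coupling in the Lipschitz estimate. It is also worth noting that this simple route is confined to the zero-Neumann, globally-Lipschitz-$R_{i}$ setting: a nonzero Robin term would leave an $\varepsilon$-scaled trace integral in the linearized weak formulation and would force a Lipschitz hypothesis on the surface reaction $F_{i}$, which is not assumed here.
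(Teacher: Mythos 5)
Your proposal is correct and is essentially the paper's own argument: the paper runs the Picard iteration $\left(P_{n}^{\varepsilon}\right)$ and proves exactly your contraction estimate for consecutive iterates, with the same constant $\kappa_{p}=C_{p}\,\alpha^{-1}\max_{1\le i\le N}\{L_{i}\}\,N<1$ coming from $\left(\mbox{A}_{1}\right)$, the global Lipschitz bound and the $\varepsilon$-uniform Poincar\'e inequality, then concludes via the explicit Cauchy-sequence argument rather than quoting the Banach fixed-point theorem. The only cosmetic difference is that you phrase it through the solution operator $T$ (which also makes uniqueness of the solution, not just of the limit, immediate), whereas the paper unwinds the same contraction to additionally record the convergence rate $\kappa_{p}^{n}/(1-\kappa_{p})$ of the linearization.
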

\begin{proof}
It is worth noting that the problem \eqref{eq:Pn} admits a unique
solution in $\mathcal{V}^{\varepsilon}$ for any $n$. Then, the functional
$w_{i}^{\varepsilon,n}=u_{i}^{\varepsilon,n+1}-u_{i}^{\varepsilon,n}\in V^{\varepsilon}$
satisfies the following problem:
\[
\begin{cases}
\nabla\cdot\left(-d_{i}^{\varepsilon}\nabla w_{i}^{\varepsilon,n}\right)=R_{i}\left(u^{\varepsilon,n}\right)-R_{i}\left(u^{\varepsilon,n-1}\right), & \mbox{in}\;\Omega^{\varepsilon},\\
d_{i}^{\varepsilon}\nabla w_{i}^{\varepsilon,n}\cdot\mbox{n}=0, & \mbox{on}\;\Gamma^{\varepsilon},\\
w_{i}^{\varepsilon,n}=0. & \mbox{on}\;\Gamma^{ext},
\end{cases}
\]

Using the test function $\psi_{i}\in V^{\varepsilon}$ we arrive at
\[
\left\langle d_{i}^{\varepsilon}w_{i}^{\varepsilon,n},\psi_{i}\right\rangle _{V^{\varepsilon}}=\left\langle R_{i}\left(u^{\varepsilon,n}\right)-R_{i}\left(u^{\varepsilon,n-1}\right),\psi_{i}\right\rangle _{L^{2}\left(\Omega^{\varepsilon}\right)}
\]

We may consider an estimate for the above expression:
\begin{equation}
\alpha\sum_{i=1}^{N}\left|\left\langle w_{i}^{\varepsilon,n},\psi_{i}\right\rangle _{V^{\varepsilon}}\right|\le\sum_{i=1}^{N}L_{i}N\left|\left\langle w_{i}^{\varepsilon,n-1},\psi_{i}\right\rangle _{L^{2}\left(\Omega^{\varepsilon}\right)}\right|\label{eq:3.23}
\end{equation}

Thanks to H\"older's and Poincar\'e inequalities, we have
\[
\sum_{i=1}^{N}\left|\left\langle w_{i}^{\varepsilon,n},\psi_{i}\right\rangle _{V^{\varepsilon}}\right|\le C_{p}\alpha^{-1}\max_{1\le i\le N}\left\{ L_{i}\right\} N\left\Vert w^{\varepsilon,n-1}\right\Vert _{\mathcal{V}^{\varepsilon}}\left\Vert \psi\right\Vert _{\mathcal{V}^{\varepsilon}},
\]
where $C_{p}>0$ is the Poincar\'e constant independent of the choice
of $\varepsilon$, but the dimension $d$ of the media (see, e.g.
\cite[Lemma 2.1]{CP99} and \cite{DD02}).

At this point, if the constant $\alpha^{-1}\max_{1\le i\le N}\left\{ L_{i}\right\} N$
is small enough such that $\kappa_{p}:=C_{p}\alpha^{-1}\max_{1\le i\le N}\left\{ L_{i}\right\} N<1$,
then choosing $\psi_{i}=w_{i}^{\varepsilon,n}$ for $i\in\left\{ 1,...,N\right\} $
we obtain that
\[
\left\Vert w^{\varepsilon,n}\right\Vert _{\mathcal{V}^{\varepsilon}}\le\kappa_{p}\left\Vert w^{\varepsilon,n-1}\right\Vert _{\mathcal{V}^{\varepsilon}}.
\]

Consequently, for some $k\in\mathbb{N}$ we get
\begin{eqnarray}
\left\Vert u^{\varepsilon,n+k}-u^{\varepsilon,n}\right\Vert _{\mathcal{V}^{\varepsilon}} & \le & \left\Vert u^{\varepsilon,n+k}-u^{\varepsilon,n+k-1}\right\Vert _{\mathcal{V}^{\varepsilon}}+...+\left\Vert u^{\varepsilon,n+1}-u^{\varepsilon,n}\right\Vert _{\mathcal{V}^{\varepsilon}}\nonumber \\
 & \le & \kappa_{p}^{n+k-1}\left\Vert u^{\varepsilon,1}-u^{\varepsilon,0}\right\Vert _{\mathcal{V}^{\varepsilon}}+...+\kappa_{p}^{n}\left\Vert u^{\varepsilon,1}-u^{\varepsilon,0}\right\Vert _{\mathcal{V}^{\varepsilon}}\nonumber \\
 & \le & \kappa_{p}^{n}\left(\kappa_{p}^{k-1}+\kappa_{p}^{k-2}+...+1\right)\left\Vert u^{\varepsilon,1}\right\Vert _{\mathcal{V}^{\varepsilon}}\nonumber \\
 & \le & \frac{\kappa_{p}^{n}\left(1-\kappa_{p}^{k}\right)}{1-\kappa_{p}}\left\Vert u^{\varepsilon,1}\right\Vert _{\mathcal{V}^{\varepsilon}}.\label{eq:uu}
\end{eqnarray}

Therefore, $\left\{ u^{\varepsilon,n}\right\} $ is a Cauchy sequence
in $\mathcal{V}^{\varepsilon}$, and then there exists uniquely $u^{\varepsilon}\in\mathcal{V}^{\varepsilon}$
such that $u^{\varepsilon,n}\to u^{\varepsilon}$ strongly in $\mathcal{V}^{\varepsilon}$
as $n\to\infty$. Remarkably, this convergence combining with the
Lipschitz property of $R_{i}$ leads to the fact that $R_{i}\left(u^{\varepsilon,n}\right)\to R_{i}\left(u^{\varepsilon}\right)$
strongly in $\mathcal{V}^{\varepsilon}$ as $n\to\infty$. As a result,
the function $u^{\varepsilon}$ is the solution of the problem $\left(P^{\varepsilon}\right)$
when passing to the limit in $n$.

In addition, when $k\to\infty$, it follows from \eqref{eq:uu} that
\[
\left\Vert u^{\varepsilon,n}-u^{\varepsilon}\right\Vert _{\mathcal{V}^{\varepsilon}}\le\frac{\kappa_{p}^{n}}{1-\kappa_{p}}\left\Vert u^{\varepsilon,1}\right\Vert _{\mathcal{V}^{\varepsilon}},
\]
which implies the convergence rate of the linearization and guarantees the stability of the problem $\left(P^{\varepsilon}\right)$.
\end{proof}

\section{Homogenization asymptotics. Corrector estimates}\label{HM}

\subsection{Two-scale asymptotic expansions}


For every $i\in\left\{ 1,...,N\right\} $, we introduce the following
$M$th-order expansion $\left(M\ge2\right)$:
\begin{equation}
u_{i}^{\varepsilon}\left(x\right)=\sum_{m=0}^{M}\varepsilon^{m}u_{i,m}\left(x,y\right)+\mathcal{O}\left(\varepsilon^{M+1}\right),\quad x\in\Omega^{\varepsilon},\label{eq:expansion}
\end{equation}
where $u_{i,m}\left(x,\cdot\right)$ is $Y$-periodic for $0\le m\le M$.

It follows from \eqref{eq:expansion} that
\begin{eqnarray}
\nabla u_{i}^{\varepsilon} & = & \left(\nabla_{x}+\varepsilon^{-1}\nabla_{y}\right)\left(\sum_{m=0}^{M}\varepsilon^{m}u_{i,m}+\mathcal{O}\left(\varepsilon^{M+1}\right)\right)\nonumber \\
 & = & \varepsilon^{-1}\nabla_{y}u_{i,0}+\sum_{m=0}^{M-1}\varepsilon^{m}\left(\nabla_{x}u_{i,m}+\nabla_{y}u_{i,m+1}\right)+\mathcal{O}\left(\varepsilon^{M}\right).\label{eq:laplaceu}
\end{eqnarray}

Using the relation of the operator $\mathcal{A}^{\varepsilon}$ and
\eqref{eq:laplaceu}, we compute that
\begin{eqnarray*}
\mathcal{A}_{\varepsilon}u_{i}^{\varepsilon} & = & \left(\nabla_{x}+\varepsilon^{-1}\nabla_{y}\right)\cdot\left(-d_{i}\left(y\right)\left[\varepsilon^{-1}\nabla_{y}u_{i,0}+\sum_{m=0}^{M-1}\varepsilon^{m}\left(\nabla_{x}u_{i,m}+\nabla_{y}u_{i,m+1}\right)\right]\right)\\
 &  & +\mathcal{O}\left(\varepsilon^{M-1}\right),
\end{eqnarray*}
then after collecting those having the same powers of $\varepsilon$,
we obtain
\begin{eqnarray}
\mathcal{A}_{\varepsilon}u_{i}^{\varepsilon} & = & \varepsilon^{-2}\nabla_{y}\cdot\left(-d_{i}\left(y\right)\nabla_{y}u_{i,0}\right)\nonumber \\
 &  & +\varepsilon^{-1}\left[\nabla_{x}\cdot\left(-d_{i}\left(y\right)\nabla_{y}u_{i,0}\right)+\nabla_{y}\cdot\left(-d_{i}\left(y\right)\left(\nabla_{x}u_{i,0}+\nabla_{y}u_{i,1}\right)\right)\right]\nonumber \\
 &  & +\sum_{m=0}^{M-2}\varepsilon^{m}\left[\nabla_{x}\cdot\left(-d_{i}\left(y\right)\left(\nabla_{x}u_{i,m}+\nabla_{y}u_{i,m+1}\right)\right)\right.\nonumber \\
 &  & \left.+\nabla_{y}\cdot\left(-d_{i}\left(y\right)\left(\nabla_{x}u_{i,m+1}+\nabla_{y}u_{i,m+2}\right)\right)\right]+\mathcal{O}\left(\varepsilon^{M-1}\right).\label{eq:newoperator}
\end{eqnarray}

In the same vein, we take into consideration the boundary condition
at $\Gamma^{\varepsilon}$ as follows:
\begin{eqnarray}
-d_{i}^{\varepsilon}\nabla u_{i}^{\varepsilon}\cdot\mbox{n} & := & -d_{i}\left(y\right)\left(\varepsilon^{-1}\nabla_{y}u_{i,0}+\sum_{m=0}^{M-1}\varepsilon^{m}\left(\nabla_{x}u_{i,m}+\nabla_{y}u_{i,m+1}\right)\right)\cdot\mbox{n}\nonumber \\
 & = & \varepsilon b_{i}\left(y\right)F_{i}\left(\sum_{m=0}^{M-1}\varepsilon^{m}u_{i,m}\right)-a_{i}\left(y\right)\sum_{m=0}^{M-1}\varepsilon^{m+1}u_{i,m}+\mathcal{O}\left(\varepsilon^{M}\right).\label{eq:boundary}
\end{eqnarray}

It is worth noting that in order to investigate the convergence analysis,
we give assumptions that allow to pull the $\varepsilon$-dependent
quantities out of the nonlinearities $R_{i}$ and $F_{i}$: 
\begin{equation}
R_{i}\left(\sum_{m=0}^{M}\varepsilon^{m}u_{1,m},...,\sum_{m=0}^{M}\varepsilon^{m}u_{N,m}\right)=\sum_{m=0}^{M}\varepsilon^{m}\bar{R}_{i}\left(u_{1,m},...,u_{N,m}\right)+\mathcal{O}\left(\varepsilon^{M+1}\right),\label{eq:newR}
\end{equation}
\begin{equation}
F_{i}\left(\sum_{m=0}^{M}\varepsilon^{m}u_{i,m}\right)=\sum_{m=0}^{M}\varepsilon^{m}\bar{F}_{i}\left(u_{i,m}\right)+\mathcal{O}\left(\varepsilon^{M+1}\right),\label{eq:newF}
\end{equation}
in which $\bar{R}_{i}$ and $\bar{F}_{i}$ are global Lipschitz functions
corresponding to the Lipschitz constant $L_{i}$ and $K_{i}$, respectively,
for $i\in\left\{ 1,...,N\right\} $.

From now on, collecting the coefficients of the same powers of $\varepsilon$
in \eqref{eq:newoperator} and \eqref{eq:boundary} in combination
with using \eqref{eq:newR} and \eqref{eq:newF}, we are led to the
following systems of elliptic problems, which we refer to the auxiliary
problems:
\begin{equation}
\begin{cases}
\mathcal{A}_{0}u_{i,0}=0, & \mbox{in}\;Y_{1},\\
-d_{i}\left(y\right)\nabla_{y}u_{i,0}\cdot\mbox{n}=0, & \mbox{on}\;\partial Y_{0},\\
u_{i,0}\;\mbox{is}\;Y-\mbox{periodic in}\;y,
\end{cases}\label{eq:prob1}
\end{equation}

\begin{equation}
\begin{cases}
\mathcal{A}_{0}u_{i,1}=-\mathcal{A}_{1}u_{i,0}, & \mbox{in}\;Y_{1},\\
-d_{i}\left(y\right)\left(\nabla_{x}u_{i,0}+\nabla_{y}u_{i,1}\right)\cdot\mbox{n}=0, & \mbox{on}\;\partial Y_{0},\\
u_{i,1}\;\mbox{is}\;Y-\mbox{periodic in}\;y,
\end{cases}\label{eq:prob2}
\end{equation}

\begin{equation}
\begin{cases}
\mathcal{A}_{0}u_{i,m+2}=\bar{R}_{i}\left(u_{m}\right)-\mathcal{A}_{1}u_{i,m+1}-\mathcal{A}_{2}u_{i,m}, & \mbox{in}\;Y_{1},\\
-d_{i}\left(y\right)\left(\nabla_{x}u_{i,m+1}+\nabla_{y}u_{i,m+2}\right)\cdot\mbox{n}=b_{i}\left(y\right)\bar{F}_{i}\left(u_{i,m}\right)-a_{i}\left(y\right)u_{i,m}, & \mbox{on}\;\partial Y_{0},\\
u_{i,m+2}\;\mbox{is}\;Y-\mbox{periodic in}\;y,
\end{cases}\label{eq:probm}
\end{equation}
for $0\le m\le M-2$.

Here, the notation $u_{m}$ is ascribed to the vector containing elements
$u_{i,m}$ for all $i\in\left\{ 1,...,N\right\} $, and we have denoted
by
\begin{eqnarray*}
\mathcal{A}_{0} & := & \nabla_{y}\cdot\left(-d_{i}\left(y\right)\nabla_{y}\right),\\
\mathcal{A}_{1} & := & \nabla_{x}\cdot\left(-d_{i}\left(y\right)\nabla_{y}\right)+\nabla_{y}\cdot\left(-d_{i}\left(y\right)\nabla_{x}\right),\\
\mathcal{A}_{2} & := & \nabla_{x}\cdot\left(-d_{i}\left(y\right)\nabla_{x}\right).
\end{eqnarray*}

For the first auxiliary problem \eqref{eq:prob1}, it is trivial to
prove that the solution to \eqref{eq:prob1} is independent of $y$,
and hence we obtain
\begin{equation}
u_{i,0}\left(x,y\right)=\tilde{u}_{i,0}\left(x\right).\label{eq:4.12}
\end{equation}

For the second auxiliary problem \eqref{eq:prob2}, we recall the
result in \cite[Lemma 2.1]{PPSW93} to ensure the existence and uniqueness
of periodic solutions to the elliptic problem, which is called the
solvability condition. In this case, this condition satisfies itself
because we easily get from the PDE in \eqref{eq:prob2} that
\[
-\int_{\partial Y_{1}}d_{i}\left(y\right)\nabla_{y}u_{i,1}\cdot\mbox{n}dS_{y}=\int_{\partial Y_{0}}d_{i}\left(y\right)\nabla_{x}\tilde{u}_{i,0}\cdot\mbox{n}dS_{y},
\]
by Gau\ss{}'s theorem. Thus, it claims the existence of a unique
weak solution to \eqref{eq:prob2}.

Moreover, this solution is sought by using separation of variables:
\begin{equation}
u_{i,1}\left(x,y\right)=-\chi_{i}\left(y\right)\cdot\nabla_{x}\tilde{u}_{i,0}\left(x\right)+\mathcal{C}_{i}\left(x\right).\label{eq:4.13}
\end{equation}

Substituting \eqref{eq:4.13} into \eqref{eq:prob2}, we obtain the
$i$th cell problem:
\begin{equation}
\begin{cases}
\mathcal{A}_{0}\chi_{i}=\nabla_{y}d_{i}\left(y\right), & \mbox{in}\;Y_{1},\\
-d_{i}\left(y\right)\nabla_{y}\chi_{i}\cdot\mbox{n}=d_{i}\left(y\right)\cdot\mbox{n}, & \mbox{on}\;\partial Y_{0},\\
\chi_{i}\;\mbox{is}\;Y-\mbox{periodic in}\;y,
\end{cases}\label{eq:xi}
\end{equation}
in which the field $\chi_{i}\left(y\right)$ is called 
cell function. Additionally, by the definition
of the mean value, we have
\begin{equation}
\mathcal{M}_{Y}\left(\chi_{i}\right):=\frac{1}{\left|Y\right|}\int_{Y_{1}}\chi_{i}dy=0.\label{eq:4.14}
\end{equation}

As a consequence, it can be proved that $\chi_{i}$ belongs to the
space $H_{\#}^{1}\left(Y_{1}\right)/\mathbb{R}$ and satisfies \eqref{eq:4.14}.

Now, it only remains to consider the third auxiliary problem \eqref{eq:probm}.
Assume that we have in mind the functions $u_{m}$ and $u_{m+1}$,
then to find $u_{m+2}$ let us remark that the right-hand side of
the PDE in \eqref{eq:probm} can be rewritten as
\begin{eqnarray}
\bar{R}_{i}\left(u_{m}\right)-\mathcal{A}_{1}u_{i,m+1}-\mathcal{A}_{2}u_{i,m} & = & \bar{R}_{i}\left(u_{m}\right)+\nabla_{y}\left(d_{i}\left(y\right)\nabla_{x}u_{i,m+1}\right)\nonumber \\
 &  & +\nabla_{x}\left(d_{i}\left(y\right)\left(\nabla_{x}u_{i,m}+\nabla_{y}u_{i,m+1}\right)\right).\label{eq:4.14-1}
\end{eqnarray}

We define the operator $\mathcal{L}_{i}\left(\psi\right)$ for $i\in\left\{ 1,...,N\right\} $
by multiplying \eqref{eq:4.14-1} by a test function $\psi\in C_{\#}^{\infty}\left(Y_{1}\right)$,
as follows:
\begin{eqnarray*}
\mathcal{L}_{i}\left(\psi\right) & = & \int_{Y_{1}}\bar{R}_{i}\left(u_{m}\right)\psi dy+\int_{Y_{1}}\nabla_{y}\left(d_{i}\left(y\right)\nabla_{x}u_{i,m+1}\right)\psi dy\\
 &  & +\int_{Y_{1}}\nabla_{x}\left(d_{i}\left(y\right)\left(\nabla_{x}u_{i,m}+\nabla_{y}u_{i,m+1}\right)\right)\psi dy\\
 &  & =\int_{Y_{1}}\bar{R}_{i}\left(u_{m}\right)\psi dy-\int_{Y_{1}}d_{i}\left(y\right)\nabla_{x}u_{i,m+1}\nabla_{y}\psi dy\\
 &  & +\int_{Y_{1}}\nabla_{x}\left(d_{i}\left(y\right)\left(\nabla_{x}u_{i,m}+\nabla_{y}u_{i,m+1}\right)\right)\psi dy.
\end{eqnarray*}

To apply the Lax-Milgram type lemma provided by \cite[Lemma 2.2]{CP99},
we need $\mathcal{L}_{i}\left(\psi_{1}\right)=\mathcal{L}_{i}\left(\psi_{2}\right)$
for $\psi_{1},\psi_{2}\in H_{\#}^{1}\left(Y_{1}\right)/\mathbb{R}$
with $\psi_{1}\simeq\psi_{2}$, or it is equivalent to
\begin{equation}
\int_{Y_{1}}\bar{R}_{i}\left(u_{m}\right)\left(\psi_{1}-\psi_{2}\right)dy+\int_{Y_{1}}\nabla_{x}\left(d_{i}\left(y\right)\left(\nabla_{x}u_{i,m}+\nabla_{y}u_{i,m+1}\right)\right)\left(\psi_{1}-\psi_{2}\right)dy=0.\label{eq:4.15}
\end{equation}

Note that $\psi_{1}-\psi_{2}$ is independent of $y$. Hence, \eqref{eq:4.15}
becomes
\begin{equation}
\int_{Y_{1}}\nabla_{x}\left(-d_{i}\left(y\right)\left(\nabla_{x}u_{i,m}+\nabla_{y}u_{i,m+1}\right)\right)dy=\int_{Y_{1}}\bar{R}_{i}\left(u_{m}\right)dy.\label{eq:13}
\end{equation}

For simplicity, we first take $m=0$. Remind from \eqref{eq:4.12}
and \eqref{eq:4.13} that $u_{i,0}$ and $u_{i,1}$ are known, while
the term $R_{i}\left(u_{0}\right)$ depends on $x$ only, then one
has
\[
\int_{Y_{1}}\nabla_{x}\left(-d_{i}\left(y\right)\left(-\nabla_{y}\chi_{i}\nabla_{x}\tilde{u}_{i,0}+\nabla_{x}\tilde{u}_{i,0}\right)\right)dy=\left|Y_{1}\right|\bar{R}_{i}\left(u_{0}\right),
\]
or equivalently,
\[
\int_{Y_{1}}\nabla_{x}\left(-d_{i}\left(y\right)\left(-\nabla_{y}\chi_{i}+\mathbb{I}\right)\nabla_{x}\tilde{u}_{i,0}\right)dy=\left|Y_{1}\right|\bar{R}_{i}\left(u_{0}\right).
\]

Thus, if we set the homogenized (or effective) coefficient 
\[
q_{i}=\frac{1}{\left|Y\right|}\int_{Y_{1}}d_{i}\left(y\right)\left(-\nabla_{y}\chi_{i}+\mathbb{I}\right)dy,
\]
the $\tilde{u}_{i,0}$ must satisfy (in the ``almost all''
sense)
\begin{equation}
-\nabla_{x}\left(q_{i}\nabla_{x}\tilde{u}_{i,0}\right)=\left|Y\right|^{-1}\left|Y_{1}\right|\bar{R}_{i}\left(u_{0}\right),\quad\mbox{in}\;\Omega.\label{eq:14}
\end{equation}

On the other hand, it is associated with $\tilde{u}_{i,0}=0$ at $\Gamma^{ext}$
and still satisfies the ellipticity condition.

Let us now determine $u_{i,2}$. At first, the PDE in \eqref{eq:probm}
(for $m=0$) is given by
\begin{equation}
\mathcal{A}_{0}u_{i,2}=\bar{R}_{i}\left(u_{0}\right)-d_{i}\left(y\right)\nabla_{y}\chi_{i}\nabla_{x}^{2}\tilde{u}_{i,0}-\nabla_{y}\left(d_{i}\left(y\right)\chi_{i}\right)\nabla_{x}^{2}\tilde{u}_{i,0}+d_{i}\left(y\right)\nabla_{x}^{2}\tilde{u}_{i,0},\quad\mbox{in}\;Y_{1}.\label{eq:13-1}
\end{equation}

Next, the boundary condition reads
\[
-d_{i}\left(y\right)\nabla_{y}u_{i,2}\cdot\mbox{n}=b_{i}\left(y\right)\bar{F}_{i}\left(u_{i,0}\right)-a_{i}\left(y\right)u_{i,0}-d_{i}\left(y\right)\chi_{i}\nabla_{x}^{2}\tilde{u}_{i,0}\cdot\mbox{n},\quad\mbox{on}\;\partial Y_{0}.
\]

Note that \eqref{eq:13-1} can be rewritten as
\[
\mathcal{A}_{0}u_{i,2}-\nabla_{y}\left(d_{i}\left(y\right)\chi_{i}\nabla_{x}^{2}\tilde{u}_{i,0}\right)=\bar{R}_{i}\left(u_{0}\right)-d_{i}\left(y\right)\left(\nabla_{y}\chi_{i}-\mathbb{I}\right)\nabla_{x}^{2}\tilde{u}_{i,0}.
\]

Using the relation \eqref{eq:14}, we have
\begin{equation}
\mathcal{A}_{0}u_{i,2}+\mathcal{A}_{0}\left(\chi_{i}\nabla_{x}^{2}\tilde{u}_{i,0}\right)=-\left|Y_{1}\right|^{-1}\left|Y\right|\nabla_{x}\left(q_{i}\nabla_{x}\tilde{u}_{i,0}\right)-d_{i}\left(y\right)\left(\nabla_{y}\chi_{i}-\mathbb{I}\right)\nabla_{x}^{2}\tilde{u}_{i,0}.\label{eq:16}
\end{equation}

Therefore, \eqref{eq:16} allows us to look for $u_{i,2}$ of the
form
\begin{equation}
u_{i,2}\left(x,y\right)=\theta_{i}\left(y\right)\nabla_{x}^{2}\tilde{u}_{i,0},\label{eq:u2}
\end{equation}
in which such a function $\theta_{i}$ is the solution of the following
problem
\begin{equation}
\begin{cases}
\mathcal{A}_{0}\left(\nabla_{y}\theta_{i}-\chi_{i}\right)=-\left|Y_{1}\right|^{-1}\left|Y\right|q_{i}-d_{i}\left(y\right)\left(\nabla_{y}\chi_{i}-\mathbb{I}\right), & \mbox{in}\;Y_{1},\\
-d_{i}\left(y\right)\left(\nabla_{y}\theta_{i}-\chi_{i}\right)\cdot\mbox{n}=b_{i}\left(y\right)\bar{F}_{i}\left(u_{i,0}\right)-a_{i}\left(y\right)u_{i,0}, & \mbox{on}\;\partial Y_{0},\\
\theta_{i}\;\mbox{is}\;Y-\mbox{periodic in}\;y.
\end{cases}\label{eq:18-1}
\end{equation}

In conclusion, we have derived an expansion of $u_{i}^{\varepsilon}\left(x\right)$
up to the second-order corrector. In particular, we deduced that
\begin{equation}
u_{i}^{\varepsilon}\left(x\right)=\tilde{u}_{i,0}\left(x\right)-\varepsilon\chi_{i}\left(\frac{x}{\varepsilon}\right)\cdot\nabla_{x}\tilde{u}_{i,0}\left(x\right)+\varepsilon^{2}\theta_{i}\left(\frac{x}{\varepsilon}\right)\nabla_{x}^{2}\tilde{u}_{i,0}\left(x\right)+\mathcal{O}\left(\varepsilon^{3}\right),\quad x\in\Omega^{\varepsilon},\label{eq:22}
\end{equation}
where $\tilde{u}_{i,0}$ can be solved by the microscopic problem
\eqref{eq:prob1}, $\chi_{i}$ satisfies the cell problem \eqref{eq:xi},
and $\theta_{i}$ satisfies the cell problem \eqref{eq:18-1}. Moreover,
the homogenized equation is defined in \eqref{eq:14}.

For the time being, it remains to derive the macroscopic equation
from the PDE for $u_{i,2}$ in \eqref{eq:probm} for $m=0$. When
doing so, the following solvability condition has to be fulfilled:
\begin{equation}
\int_{Y_{1}}\left(\bar{R}_{i}\left(u_{0}\right)-\mathcal{A}_{1}u_{i,1}-\mathcal{A}_{2}\tilde{u}_{i,0}\right)dy=\int_{\partial Y_{0}}\left(b_{i}\left(y\right)\bar{F}_{i}\left(\tilde{u}_{i,0}\right)-a_{i}\left(y\right)\tilde{u}_{i,0}+d_{i}\left(y\right)\nabla_{x}u_{i,1}\cdot\mbox{n}\right)dS_{y}.\label{eq:19}
\end{equation}

The left-hand side of \eqref{eq:19} can be rewritten as
\begin{equation}
\int_{Y_{1}}\bar{R}_{i}\left(u_{0}\right)dy+\int_{Y_{1}}\nabla_{y}\left(d_{i}\left(y\right)\nabla_{x}u_{i,1}\right)dy+\int_{Y_{1}}\nabla_{x}\left(d_{i}\left(y\right)\left(\nabla_{x}\tilde{u}_{i,0}+\nabla_{y}u_{i,1}\right)\right)dy.\label{eq:20}
\end{equation}

Let us consider the last two integrals in \eqref{eq:20}. In fact,
we have
\begin{equation}
\int_{Y_{1}}\nabla_{x}\left(d_{i}\left(y\right)\nabla_{x}\tilde{u}_{i,0}\right)dy=\nabla_{x}\cdot\left[\left(\int_{Y_{1}}d_{i}\left(y\right)dy\right)\nabla_{x}\tilde{u}_{i,0}\right]=\left(\int_{Y_{1}}d_{i}\left(y\right)dy\right):\nabla_{x}\nabla_{x}\tilde{u}_{i,0},\label{eq:21}
\end{equation}
where we have used the inner product (or exactly, double dot product)
between two matrices
\[
A:B:=\mbox{tr}\left(A^{T}B\right)=\sum_{ij}a_{ij}b_{ij}.
\]

In addition, by periodicity and Gau\ss{}'s theorem we obtain
\begin{equation}
\int_{Y_{1}}\nabla_{y}\left(d_{i}\left(y\right)\nabla_{x}u_{i,1}\right)dy=\int_{\partial Y_{0}}d_{i}\left(y\right)\nabla_{x}u_{i,1}\cdot\mbox{n}dS_{y}.
\end{equation}

Next, employing the double dot product again, we get
\begin{equation}
\int_{Y_{1}}\nabla_{x}\left(d_{i}\left(y\right)\nabla_{y}u_{i,1}\right)dy=-\int_{Y_{1}}\left(d_{i}\left(y\right)\nabla_{y}\chi_{i}\right)dy:\nabla_{x}\nabla_{x}\tilde{u}_{i,0}.\label{eq:23}
\end{equation}

Combining \eqref{eq:19}, \eqref{eq:21}-\eqref{eq:23} yields the
macroscopic equation:
\[
\left(\int_{Y_{1}}\left(d_{i}\left(y\right)-d_{i}\left(y\right)\nabla_{y}\chi_{i}\right)dy\right):\nabla_{x}\nabla_{x}\tilde{u}_{i,0}=\left\langle b_{i}\right\rangle \bar{F}_{i}\left(\tilde{u}_{i,0}\right)-\left\langle a_{i}\right\rangle \tilde{u}_{i,0}-\left|Y_{1}\right|\bar{R}_{i}\left(u_{0}\right),
\]
where we have denoted by
\begin{eqnarray*}
\left\langle a_{i}\right\rangle  & := & \int_{\partial Y_{0}}a_{i}\left(y\right)dy,\\
\left\langle b_{i}\right\rangle  & := & \int_{\partial Y_{0}}b_{i}\left(y\right)dy.
\end{eqnarray*}

Furthermore, this equation is associated with the boundary condition
$\tilde{u}_{i,0}=0$ at $\Gamma^{ext}$.

\subsection{Corrector estimates. Justification of the asymptotics}

From the point of view of applications, upper bound estimates on convergence
rates over the space $\mathcal{V}^{\varepsilon}$ in terms of quantitative
analysis tells how fast one can approximate both $u^{\varepsilon}$,
the solution of systems $\left(P^{\varepsilon}\right)$, and $\nabla u^{\varepsilon}$
by the asymptotic expansion \eqref{eq:22}. On the other hand, it
also gives rise to the question that: how much information on data
will we need via such averaging techniques?

We introduce the well-known cut-off function $m^{\varepsilon}\in C_{c}^{\infty}\left(\Omega\right)$
such that $\varepsilon\left|\nabla m^{\varepsilon}\right|\le C$ and
\[
m^{\varepsilon}(x):=\begin{cases}
1, & \mbox{if}\;\mbox{dist}\left(x,\Gamma\right)\le\varepsilon,\\
0, & \mbox{if}\;\mbox{dist}\left(x,\Gamma\right)\ge2\varepsilon.
\end{cases}
\]

For $i\in\left\{ 1,...,N\right\} $, we define the function $\Psi_{i}^{\varepsilon}$
by
\[
\Psi_{i}^{\varepsilon}:=\varphi_{i}^{\varepsilon}+\left(1-m^{\varepsilon}\right)\left(\varepsilon u_{i,1}+\varepsilon^{2}u_{i,2}\right),
\]
where we have denoted by
\[
\varphi_{i}^{\varepsilon}:=u_{i}^{\varepsilon}-\left(u_{i,0}+\varepsilon u_{i,1}+\varepsilon^{2}u_{i,2}\right).
\]

Due to the auxiliary problems \eqref{eq:prob1}-\eqref{eq:probm},
we have
\begin{equation}
\mathcal{A}^{\varepsilon}\varphi_{i}^{\varepsilon}=R_{i}\left(u^{\varepsilon}\right)-\bar{R}_{i}\left(u_{0}\right)-\varepsilon\left(\mathcal{A}_{2}u_{i,1}+\mathcal{A}_{1}u_{i,2}\right)-\varepsilon^{2}\mathcal{A}_{2}u_{i,2},\quad x\in\Omega^{\varepsilon},\label{eq:AA}
\end{equation}
while on the boundary $\Gamma^{\varepsilon}$, the function $\varphi_{i}^{\varepsilon}$
satisfies
\begin{equation}
-d_{i}^{\varepsilon}\nabla_{x}\varphi_{i}^{\varepsilon}\cdot\mbox{n}=\varepsilon^{2}d_{i}^{\varepsilon}\nabla_{x}u_{i,2}\cdot\mbox{n}+\varepsilon\left[a_{i}^{\varepsilon}\left(u_{i,0}-u_{i}^{\varepsilon}\right)+b_{i}^{\varepsilon}\left(F_{i}\left(u_{i}^{\varepsilon}\right)-\bar{F}_{i}\left(u_{i,0}\right)\right)\right].\label{eq:BB}
\end{equation}

Rewriting the above information, the function $\varphi_{i}^{\varepsilon}$
satisfies the following system:
\begin{equation}
\begin{cases}
\mathcal{A}_{\varepsilon}\varphi_{i}^{\varepsilon}=\bar{R}_{i}\left(u^{\varepsilon}\right)-\bar{R}_{i}\left(u_{0}\right)+\varepsilon g_{i}^{\varepsilon}, & \mbox{in}\;\Omega^{\varepsilon},\\
-d_{i}^{\varepsilon}\nabla_{x}\varphi_{i}^{\varepsilon}\cdot\mbox{n}=\varepsilon^{2}h_{i}^{\varepsilon}\cdot\mbox{n}+\varepsilon l_{i}^{\varepsilon}, & \mbox{on}\;\Gamma^{\varepsilon},
\end{cases}\label{eq:24}
\end{equation}
where we have denoted by
\begin{eqnarray*}
g_{i}^{\varepsilon} & := & -d_{i}\left(\frac{x}{\varepsilon}\right)\chi_{i}\left(\frac{x}{\varepsilon}\right)\nabla_{x}^{3}\tilde{u}_{i,0}+d_{i}\left(\frac{x}{\varepsilon}\right)\theta_{i}\left(\frac{x}{\varepsilon}\right)\nabla_{x}^{3}\tilde{u}_{i,0}\\
 &  & +\nabla_{y}\left(d_{i}\left(\frac{x}{\varepsilon}\right)\theta_{i}\left(\frac{x}{\varepsilon}\right)\right)\nabla_{x}^{3}\tilde{u}_{i,0}+\varepsilon d_{i}\left(\frac{x}{\varepsilon}\right)\theta_{i}\left(\frac{x}{\varepsilon}\right)\nabla_{x}^{4}\tilde{u}_{i,0},
\end{eqnarray*}
\[
h_{i}^{\varepsilon}:=d_{i}\left(\frac{x}{\varepsilon}\right)\theta_{i}\left(\frac{x}{\varepsilon}\right)\nabla_{x}^{3}\tilde{u}_{i,0},
\]
\[
l_{i}^{\varepsilon}:=a_{i}\left(\frac{x}{\varepsilon}\right)\left(\tilde{u}_{i,0}-u_{i}^{\varepsilon}\right)+b_{i}\left(\frac{x}{\varepsilon}\right)\left(F_{i}\left(u_{i}^{\varepsilon}\right)-\bar{F}_{i}\left(\tilde{u}_{i,0}\right)\right).
\]

Now, multiplying the PDE in \eqref{eq:24} by $\varphi_{i}\in V_{\varepsilon}$
for $i\in\left\{ 1,...,N\right\} $ and integrating by parts, we get
that
\begin{eqnarray}
\left\langle d_{i}^{\varepsilon}\varphi_{i}^{\varepsilon},\varphi_{i}\right\rangle _{V^{\varepsilon}} & = & \left\langle \bar{R}_{i}\left(u^{\varepsilon}\right)-\bar{R}_{i}\left(u_{0}\right),\varphi_{i}\right\rangle _{L^{2}\left(\Omega^{\varepsilon}\right)}+\varepsilon\left\langle g_{i}^{\varepsilon},\varphi_{i}\right\rangle _{L^{2}\left(\Omega^{\varepsilon}\right)}\nonumber \\
 &  & -\varepsilon\left\langle l_{i}^{\varepsilon},\varphi_{i}\right\rangle _{L^{2}\left(\Gamma^{\varepsilon}\right)}-\varepsilon^{2}\int_{\Gamma^{\varepsilon}}h_{i}^{\varepsilon}\cdot\mbox{n}\varphi_{i}dS_{\varepsilon}.\label{eq:4.29}
\end{eqnarray}
To guarantee all the derivatives appearing in $g_{i}^{\varepsilon}$
(up to higher order correctors), $\tilde{u}_{i,0}$, which is the
solution to \eqref{eq:14}, needs to be smooth enough, says $L^{\infty}\left(\Omega^{\varepsilon}\right)$
(cf. \cite{ADN59}), and the cell functions $\chi_{i}$ and $\theta_{i}$
to \eqref{eq:xi} and \eqref{eq:18-1}, respectively, belong at least
to $H_{\#}^{1}\left(Y_{1}\right)$ as derived above. Consequently,
it allows us to estimate $g_{i}^{\varepsilon}$ by an $\varepsilon$-independent
constant, i.e.
\begin{equation}
\left\Vert g_{i}^{\varepsilon}\right\Vert _{L^{2}\left(\Omega^{\varepsilon}\right)}\le C\quad\mbox{for all}\;i\in\left\{ 1,...,N\right\} .\label{eq:25}
\end{equation}

Furthermore, it is easily to estimate the integral including $h_{i}^{\varepsilon}$
in \eqref{eq:4.29} by the following (see, e.g. \cite{CP99}):
\[
\int_{\Gamma^{\varepsilon}}h_{i}^{\varepsilon}\cdot\mbox{n}dS_{\varepsilon}\approx C\varepsilon^{-1},
\]
which leads to
\begin{equation}
\left\Vert h_{i}^{\varepsilon}\cdot\mbox{n}\right\Vert _{L^{2}\left(\Gamma^{\varepsilon}\right)}\le C\varepsilon^{-1/2}.
\end{equation}

Now, it remains to estimate the third integral in \eqref{eq:4.29}.
Thanks to $\left(\mbox{A}_{2}\right)$ and \eqref{eq:newF}, we may
have
\begin{equation}
\left|\left\langle l_{i}^{\varepsilon},\varphi_{i}\right\rangle _{L^{2}\left(\Gamma^{\varepsilon}\right)}\right|\le C\left(1+\bar{K}_{i}\right)\left\Vert u_{i}^{\varepsilon}-\tilde{u}_{i,0}\right\Vert _{L^{2}\left(\Gamma^{\varepsilon}\right)}\left\Vert \varphi_{i}\right\Vert _{L^{2}\left(\Gamma^{\varepsilon}\right)}.
\end{equation}

In the same vein, we get:
\begin{equation}
\left|\left\langle R_{i}\left(u^{\varepsilon}\right)-\bar{R}_{i}\left(u_{0}\right),\varphi_{i}\right\rangle _{L^{2}\left(\Omega^{\varepsilon}\right)}\right|\le C\bar{L}_{i}\left\Vert u^{\varepsilon}-\tilde{u}_{0}\right\Vert _{\mathcal{V}^{\varepsilon}}\left\Vert \varphi_{i}\right\Vert _{L^{2}\left(\Omega^{\varepsilon}\right)}.\label{eq:4.34}
\end{equation}

Combining \eqref{eq:4.29}-\eqref{eq:4.34} with $\left(\mbox{A}_{1}\right)$
and putting $\bar{L}:=\max\left\{ \bar{L}_{1},...,\bar{L}_{N}\right\} $
and $\bar{K}:=1+\max\left\{ \bar{K}_{1},...,\bar{K}_{N}\right\} $,
we are led to the estimate:
\begin{eqnarray}
\alpha\sum_{i=1}^{N}\left|\left\langle \varphi_{i}^{\varepsilon},\varphi_{i}\right\rangle _{V^{\varepsilon}}\right| & \le & C\left(\bar{L}\left\Vert u^{\varepsilon}-\tilde{u}_{0}\right\Vert _{\mathcal{V}^{\varepsilon}}+\varepsilon\right)\left\Vert \varphi\right\Vert _{\mathcal{V}^{\varepsilon}}+C\left(\bar{K}\left\Vert u^{\varepsilon}-\tilde{u}_{0}\right\Vert _{\mathcal{H}\left(\Gamma^{\varepsilon}\right)}+\varepsilon^{3/2}\right)\left\Vert \varphi\right\Vert _{\mathcal{H}\left(\Gamma^{\varepsilon}\right)}\nonumber \\
 & \le & C\left(\varepsilon+\varepsilon^{1/2}\right)\left\Vert \varphi\right\Vert _{\mathcal{V}^{\varepsilon}}\le C\varepsilon^{1/2}\left\Vert \varphi\right\Vert _{\mathcal{V}^{\varepsilon}},\label{eq:4.35-1}
\end{eqnarray}
where we have made use of the trace inequality $\left\Vert \varphi\right\Vert _{\mathcal{H}\left(\Gamma^{\varepsilon}\right)}\le C\varepsilon^{-1/2}\left\Vert \varphi\right\Vert _{\mathcal{V}^{\varepsilon}}$
and the Poincar\'e inequality $\left\Vert \varphi\right\Vert _{\mathcal{H}\left(\Omega^{\varepsilon}\right)}\le C\left\Vert \varphi\right\Vert _{\mathcal{V}^{\varepsilon}}$.

Recall  that our aim is to estimate  $\left\Vert \Psi^{\varepsilon}\right\Vert _{\mathcal{V}^{\varepsilon}}$,
it remains to control the term $\left\langle \left(1-m^{\varepsilon}\right)\left(\varepsilon u_{i,1}+\varepsilon^{2}u_{i,2}\right),\varphi_{i}\right\rangle _{V^{\varepsilon}}$
for $\varphi_{i}\in V^{\varepsilon}$. In fact, one easily has that
\begin{eqnarray}
\sum_{i=1}^{N}\left|\left\langle \left(1-m^{\varepsilon}\right)\left(\varepsilon u_{i,1}+\varepsilon^{2}u_{i,2}\right),\varphi_{i}\right\rangle _{V^{\varepsilon}}\right| & \le & C\varepsilon\left\Vert \nabla\left(1-m^{\varepsilon}\right)\right\Vert _{\mathcal{H}\left(\Omega^{\varepsilon}\right)}\left\Vert \varphi\right\Vert _{\mathcal{V}^{\varepsilon}}\nonumber \\
 &  & +C\left\Vert \left(1-m^{\varepsilon}\right)\nabla\left(\varepsilon u_{1}+\varepsilon^{2}u_{2}\right)\right\Vert _{\mathcal{H}\left(\Omega^{\varepsilon}\right)}\left\Vert \varphi\right\Vert _{\mathcal{V}^{\varepsilon}}\nonumber \\
 & \le & C\left(\varepsilon^{1/2}+\varepsilon^{3/2}\right)\left\Vert \varphi\right\Vert _{\mathcal{V}^{\varepsilon}}\le C\varepsilon^{1/2}\left\Vert \varphi\right\Vert _{\mathcal{V}^{\varepsilon}},\label{eq:4.35}
\end{eqnarray}
where we have used
\[
\left\Vert \nabla\left(1-m^{\varepsilon}\right)\right\Vert _{\mathcal{H}\left(\Omega^{\varepsilon}\right)}^{2}\le N\left(\int_{\Omega^{\varepsilon}\cap\left\{ x|\mbox{dist}\left(x,\Gamma\right)\le2\varepsilon\right\} }\left|\nabla m^{\varepsilon}\right|^{2}dx\right)\le C\varepsilon^{-1},
\]

\begin{eqnarray*}
\left\Vert \left(1-m^{\varepsilon}\right)\nabla\left(\varepsilon u_{1}+\varepsilon^{2}u_{2}\right)\right\Vert _{\mathcal{H}\left(\Omega^{\varepsilon}\right)}^{2} & \le & N\varepsilon^{2}\left|\Omega^{\varepsilon}\right|\int_{\Omega^{\varepsilon}\cap\left\{ x|\mbox{dist}\left(x,\Gamma\right)\le2\varepsilon\right\} }\left|\nabla m^{\varepsilon}\right|^{2}dx\\
 & \le & C\varepsilon^{3}.
\end{eqnarray*}

Hence, by using the triangle inequality in \eqref{eq:4.35-1} and
\eqref{eq:4.35} yields that
\[
\sum_{i=1}^{N}\left|\left\langle \Psi_{i}^{\varepsilon},\varphi_{i}\right\rangle _{V^{\varepsilon}}\right|\le C\varepsilon^{1/2}\left\Vert \varphi\right\Vert _{\mathcal{V}^{\varepsilon}},
\]
which finally leads to
\[
\left\Vert \Psi^{\varepsilon}\right\Vert _{\mathcal{V}^{\varepsilon}}\le C\varepsilon^{1/2},
\]
by choosing $\varphi=\Psi^{\varepsilon}$.

Summarizing, we can now state 
of the following theorem.
\begin{thm}
\label{thm:10}Let $u^{\varepsilon}$ be the solution of the elliptic
system $\left(P^{\varepsilon}\right)$ with assumptions $\left(\mbox{A}_{1}\right)-\left(\mbox{A}_{3}\right)$
and \eqref{eq:newR}-\eqref{eq:newF} up to $M=2$. Suppose that the
unique pair $\left(u_{0},u_{m}\right)\in\mathcal{W}^{\infty}\left(\Omega^{\varepsilon}\right)\times\mathcal{W}^{\infty}\left(\Omega^{\varepsilon};H_{\#}^{1}\left(Y_{1}\right)/\mathbb{R}\right)$
for $m\in\left\{ 1,2\right\} $. The following corrector with second
order for the homogenization limit holds:
\[
\left\Vert u^{\varepsilon}-u_{0}-m^{\varepsilon}\left(\varepsilon u_{1}+\varepsilon^{2}u_{2}\right)\right\Vert _{\mathcal{V}^{\varepsilon}}\le C\varepsilon^{1/2},
\]
where $u_{0},u_{1}$ and $u_{2}$ are vectors whose elements are defined
by \eqref{eq:4.12}, \eqref{eq:4.13} and \eqref{eq:u2}, respectively.\end{thm}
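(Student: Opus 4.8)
The plan is to estimate, in the energy norm $\|\cdot\|_{\mathcal{V}^{\varepsilon}}$, the residual of the second-order two-scale expansion \eqref{eq:22}, after adding a boundary-layer term that renders the residual admissible as a test function. I would set $\varphi_i^{\varepsilon}:=u_i^{\varepsilon}-\big(u_{i,0}+\varepsilon u_{i,1}+\varepsilon^{2}u_{i,2}\big)$ and $\Psi_i^{\varepsilon}:=\varphi_i^{\varepsilon}+(1-m^{\varepsilon})\big(\varepsilon u_{i,1}+\varepsilon^{2}u_{i,2}\big)$, so that $\Psi_i^{\varepsilon}\in V^{\varepsilon}$ and $u_i^{\varepsilon}-u_{i,0}-m^{\varepsilon}(\varepsilon u_{i,1}+\varepsilon^{2}u_{i,2})=\Psi_i^{\varepsilon}$; the claim is therefore exactly $\|\Psi^{\varepsilon}\|_{\mathcal{V}^{\varepsilon}}\le C\varepsilon^{1/2}$. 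First I would read off, from the cell/auxiliary problems \eqref{eq:prob1}--\eqref{eq:probm} together with the expansions \eqref{eq:boundary}, \eqref{eq:newR}--\eqref{eq:newF}, the system \eqref{eq:24} satisfied by $\varphi_i^{\varepsilon}$: interior source $\bar R_i(u^{\varepsilon})-\bar R_i(u_{0})+\varepsilon g_i^{\varepsilon}$ and Neumann datum $\varepsilon^{2}h_i^{\varepsilon}\cdot\mathrm{n}+\varepsilon l_i^{\varepsilon}$ on $\Gamma^{\varepsilon}$, with $g_i^{\varepsilon},h_i^{\varepsilon},l_i^{\varepsilon}$ as displayed there. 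Testing \eqref{eq:24} with $\varphi_i\in V^{\varepsilon}$, summing in $i$ and integrating by parts yields the energy identity \eqref{eq:4.29}, whose left side is bounded below, by $(\mathrm{A}_{1})$, in terms of $\alpha\|\varphi^{\varepsilon}\|_{\mathcal{V}^{\varepsilon}}$; everything then reduces to estimating the four terms on the right.

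The first two are routine. Because $\tilde u_{i,0}$ solves the homogenized problem \eqref{eq:14} and is smooth enough by elliptic regularity (cf.\ \cite{ADN59}), while the cell functions $\chi_i,\theta_i$ from \eqref{eq:xi}, \eqref{eq:18-1} lie in $H^{1}_{\#}(Y_{1})/\mathbb{R}$, one obtains $\|g_i^{\varepsilon}\|_{L^{2}(\Omega^{\varepsilon})}\le C$; the surface-measure scaling on $\Gamma^{\varepsilon}$ gives $\|h_i^{\varepsilon}\cdot\mathrm{n}\|_{L^{2}(\Gamma^{\varepsilon})}\le C\varepsilon^{-1/2}$, so that, with the trace inequality $\|\varphi\|_{\mathcal{H}(\Gamma^{\varepsilon})}\le C\varepsilon^{-1/2}\|\varphi\|_{\mathcal{V}^{\varepsilon}}$ (cf.\ \cite[Lemma 2.31]{CP99}) and Poincar\'e, these two terms contribute $O(\varepsilon)$ and $O(\varepsilon^{2}\cdot\varepsilon^{-1/2}\cdot\varepsilon^{-1/2})=O(\varepsilon)$ times $\|\varphi\|_{\mathcal{V}^{\varepsilon}}$. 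Likewise the correction $\langle(1-m^{\varepsilon})(\varepsilon u_{i,1}+\varepsilon^{2}u_{i,2}),\varphi_i\rangle_{V^{\varepsilon}}$ needed to pass from $\varphi^{\varepsilon}$ to $\Psi^{\varepsilon}$ is handled by the product rule, using $\varepsilon|\nabla m^{\varepsilon}|\le C$ and the fact that $\nabla m^{\varepsilon}$ lives on a boundary strip of width $O(\varepsilon)$; this gives the $C\varepsilon^{1/2}\|\varphi\|_{\mathcal{V}^{\varepsilon}}$ bound recorded in \eqref{eq:4.35}.

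The delicate part---and what I expect to be the main obstacle---is the two nonlinear contributions: the volume term $\langle \bar R_i(u^{\varepsilon})-\bar R_i(u_{0}),\varphi_i\rangle_{L^{2}(\Omega^{\varepsilon})}$ and the surface term $\varepsilon\langle l_i^{\varepsilon},\varphi_i\rangle_{L^{2}(\Gamma^{\varepsilon})}$. Exploiting $(\mathrm{A}_{2})$ and the global Lipschitz continuity of $\bar R_i,\bar F_i$ from \eqref{eq:newR}--\eqref{eq:newF}, the first is $\le C\bar L_i\|u^{\varepsilon}-\tilde u_{0}\|_{\mathcal{V}^{\varepsilon}}\|\varphi_i\|_{L^{2}(\Omega^{\varepsilon})}$ and the second $\le C\varepsilon(1+\bar K_i)\|u_i^{\varepsilon}-\tilde u_{i,0}\|_{L^{2}(\Gamma^{\varepsilon})}\|\varphi_i\|_{L^{2}(\Gamma^{\varepsilon})}$. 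Two difficulties show up: the boundary $L^{2}(\Gamma^{\varepsilon})$ norms drag in two factors $\varepsilon^{-1/2}$ through the trace inequality, and the bounds are phrased through $u^{\varepsilon}-\tilde u_{0}$, which is not yet known to be small, so the estimate is self-referential. For the surface term the explicit $\varepsilon$ in front of $l_i^{\varepsilon}$ in \eqref{eq:24} (inherited from the $\varepsilon$-scaled flux condition) exactly pays for the $\varepsilon^{-1}$ from the two traces, reducing it to $C\|u^{\varepsilon}-\tilde u_{0}\|_{\mathcal{V}^{\varepsilon}}\|\varphi\|_{\mathcal{V}^{\varepsilon}}$; this bookkeeping is precisely where the energy-like computation in the spirit of \cite{CP99} is essential.

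To close the loop I would use $u^{\varepsilon}-\tilde u_{0}=\Psi^{\varepsilon}+m^{\varepsilon}(\varepsilon u_{1}+\varepsilon^{2}u_{2})$ together with the boundary-strip estimate once more to get $\|m^{\varepsilon}(\varepsilon u_{1}+\varepsilon^{2}u_{2})\|_{\mathcal{V}^{\varepsilon}}\le C\varepsilon^{1/2}$ and $\|u^{\varepsilon}-\tilde u_{0}\|_{\mathcal{H}(\Gamma^{\varepsilon})}\le C\varepsilon^{-1/2}\|\Psi^{\varepsilon}\|_{\mathcal{V}^{\varepsilon}}+C\varepsilon^{1/2}$, so that the right-hand side of \eqref{eq:4.29} is bounded by $C\big(\|\Psi^{\varepsilon}\|_{\mathcal{V}^{\varepsilon}}+\varepsilon^{1/2}\big)\|\varphi\|_{\mathcal{V}^{\varepsilon}}$; taking $\varphi=\Psi^{\varepsilon}$, collecting this with \eqref{eq:4.35}--\eqref{eq:4.35-1}, and absorbing the resulting $\|\Psi^{\varepsilon}\|_{\mathcal{V}^{\varepsilon}}^{2}$ term into the coercive left side---which is where one needs the ratio of the Lipschitz constants $\bar L,\bar K$ to the ellipticity constant $\alpha$ to be controlled, in the same smallness regime used in Theorem~\ref{thm:9}---yields $\|\Psi^{\varepsilon}\|_{\mathcal{V}^{\varepsilon}}\le C\varepsilon^{1/2}$. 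Since $u^{\varepsilon}-u_{0}-m^{\varepsilon}(\varepsilon u_{1}+\varepsilon^{2}u_{2})=\Psi^{\varepsilon}$, this is exactly the asserted second-order corrector estimate, with $u_{0},u_{1},u_{2}$ given by \eqref{eq:4.12}, \eqref{eq:4.13} and \eqref{eq:u2}.
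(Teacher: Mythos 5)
Your proposal is correct and follows essentially the same route as the paper: the same decomposition into $\varphi_i^{\varepsilon}$ and $\Psi_i^{\varepsilon}$ with the cut-off $m^{\varepsilon}$, the same residual system \eqref{eq:24} with $g_i^{\varepsilon},h_i^{\varepsilon},l_i^{\varepsilon}$, the same energy identity \eqref{eq:4.29}, and the same trace/Poincar\'e/boundary-strip estimates leading to \eqref{eq:4.35-1}--\eqref{eq:4.35}. The one place you deviate is the closing step: the paper passes directly from the bound involving $\bar L\left\Vert u^{\varepsilon}-\tilde u_{0}\right\Vert _{\mathcal{V}^{\varepsilon}}$ and $\bar K\left\Vert u^{\varepsilon}-\tilde u_{0}\right\Vert _{\mathcal{H}\left(\Gamma^{\varepsilon}\right)}$ to $C\varepsilon^{1/2}\left\Vert \varphi\right\Vert _{\mathcal{V}^{\varepsilon}}$ without spelling out how those self-referential terms are controlled, whereas you close the loop explicitly by writing $u^{\varepsilon}-\tilde u_{0}=\Psi^{\varepsilon}+m^{\varepsilon}\left(\varepsilon u_{1}+\varepsilon^{2}u_{2}\right)$ and absorbing $\left\Vert \Psi^{\varepsilon}\right\Vert _{\mathcal{V}^{\varepsilon}}$ into the coercive left-hand side under a smallness condition on $\bar L,\bar K$ relative to $\alpha$; this is a legitimate (indeed more careful) way to justify the step the paper leaves implicit, at the price of an extra hypothesis the theorem as stated does not list.
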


\section{Discussion}\label{D}

In real-world applications, the nonlinear reaction term $R_{i}$ is often locally Lipschitz. However, relying on  Lemma \ref{lem:3} the $L^{\infty}$-type estimate of the positive
solution makes the nonlinearity globally Lipschitz.  For example, we choose 
$N=2$ and only consider the $R_{1}\left(u_{1},u_{2}\right)=u_{1}u_{2}-u_{1}^{2}$.
We have
\[
\left|R_{1}\left(u_{1},u_{2}\right)-R_{1}\left(v_{1},v_{2}\right)\right|\le\max\left\{ \left\Vert u_{2}\right\Vert _{L^{\infty}},\left\Vert u_{1}\right\Vert _{L^{\infty}}+\left\Vert v_{1}\right\Vert _{L^{\infty}}\right\} \left(\left|u_{1}-v_{1}\right|+\left|u_{2}-v_{2}\right|\right).
\]

In addition, for $M=1$ we compute that
\begin{equation}
R_{1}\left(u_{1,0}+\varepsilon u_{1,1},u_{2,0}+\varepsilon u_{2,1}\right)=u_{1,0}u_{2,0}+\varepsilon\left(u_{1,1}u_{2,0}+u_{1,0}u_{2,1}-2u_{1,0}u_{1,1}\right)+\mathcal{O}\left(\varepsilon^{2}\right).\label{eq:4.38}
\end{equation}

Consequently, it follows from \eqref{eq:4.38} that
\begin{eqnarray*}
R_{1}\left(\sum_{m\in\left\{ 0,1\right\} }\varepsilon^{m}u_{1,m},\sum_{m\in\left\{ 0,1\right\} }\varepsilon^{m}u_{2,m}\right) & = & \sum_{m\in\left\{ 0,1\right\} }\varepsilon^{m}\left[\left(1-m\right)u_{1,0}u_{2,0}+\right.,\\
 &  & \left.+m\left(u_{1,1}u_{2,0}+u_{1,0}u_{2,1}-2u_{1,0}u_{1,1}\right)\right]+\mathcal{O}\left(\varepsilon^{2}\right).
\end{eqnarray*}
which implies $\bar{R}_{1}:=\left(1-m\right)u_{1,0}u_{2,0}+m\left(u_{1,1}u_{2,0}+u_{1,0}u_{2,1}-2u_{1,0}u_{1,1}\right)$.

If $u_{i,m},v_{i,m}\in L^{\infty}\left(\Omega^{\varepsilon}\right)$
for all $i,m$ we thus arrive at
\[
\left|\bar{R}_{1}\left(u_{1,0},u_{1,1},u_{2,0},u_{2,1}\right)-\bar{R}_{1}\left(v_{1,0},v_{1,1},v_{2,0},v_{2,1}\right)\right|\le L_{1}\sum_{m\in\left\{ 0,1\right\} ,i\in\left\{ 1,2\right\} }\left|u_{i,m}-v_{i,m}\right|,
\]
where $L_{1}=4\max\left\{ \left\Vert u_{2,0}\right\Vert _{L^{\infty}\left(\Omega^{\varepsilon}\right)},\left\Vert v_{1,0}\right\Vert _{L^{\infty}\left(\Omega^{\varepsilon}\right)},\left\Vert v_{1,1}\right\Vert _{L^{\infty}\left(\Omega^{\varepsilon}\right)},\left\Vert v_{2,1}\right\Vert _{L^{\infty}\left(\Omega^{\varepsilon}\right)},\left\Vert u_{1,0}\right\Vert _{L^{\infty}\left(\Omega^{\varepsilon}\right)},1\right\} $.

A similar discussion for the nonlinear surface rates $F_i$.  In particular, note that that if $L^\infty$ bounds are available (up to the boundary) then also the  exponential function $F\left(u\right)=e^{u}$ can be treated conveniently. 


We may repeat the homogenization procedure by the auxiliary problems
\eqref{eq:prob1}-\eqref{eq:probm} to obtain not only the general
expansion of the concentrations and corresponding problems, but also the higher order of corrector estimate due to
the $\tilde{u}_{0}$-based construction of $u_{m}$. Taking the $M$-level
expansion \eqref{eq:expansion} into consideration, the general corrector
can be found easily. Indeed, by induction we have from \eqref{eq:AA}
that for $x\in\Omega^{\varepsilon}$ 
\begin{eqnarray*}
\mathcal{A}^{\varepsilon}\varphi_{i}^{\varepsilon} & = & \mathcal{A}^{\varepsilon}u_{i}^{\varepsilon}-\varepsilon^{-2}\mathcal{A}_{0}u_{i,0}-\varepsilon^{-1}\left(\mathcal{A}_{0}u_{i,1}+\mathcal{A}_{1}u_{i,0}\right)\\
 &  & -\sum_{m=0}^{M-2}\varepsilon^{m}\left(\mathcal{A}_{0}u_{i,m+2}+\mathcal{A}_{1}u_{i,m+1}+\mathcal{A}_{2}u_{i,m}\right)\\
 &  & -\varepsilon^{M-1}\left(\mathcal{A}_{1}u_{i,M}+\mathcal{A}_{2}u_{i,M-1}\right)-\varepsilon^{M}\mathcal{A}_{2}u_{i,M}\\
 & = & R_{i}\left(u^{\varepsilon}\right)-\sum_{m=0}^{M-2}\varepsilon^{m}\bar{R}_{i}\left(u_{m}\right)-\varepsilon^{M-1}\left(\mathcal{A}_{1}u_{i,M}+\mathcal{A}_{2}u_{i,M-1}\right)-\varepsilon^{M}\mathcal{A}_{2}u_{i,M},
\end{eqnarray*}
while \eqref{eq:BB} becomes
\[
-d_{i}^{\varepsilon}\nabla_{x}\varphi_{i}^{\varepsilon}\cdot\mbox{n}=\varepsilon^{M}d_{i}^{\varepsilon}\nabla_{x}u_{i,M}+\varepsilon\left[a_{i}^{\varepsilon}\left(\sum_{m=0}^{M-2}\varepsilon^{m}u_{i,m}-u_{i}^{\varepsilon}\right)+b_{i}^{\varepsilon}\left(F\left(u_{i}^{\varepsilon}\right)-\sum_{m=0}^{M-2}\varepsilon^{m}\bar{F}\left(u_{i,m}\right)\right)\right].
\]

Thanks to the assumptions \eqref{eq:newR} and \eqref{eq:newF}, we
are totally in a position to prove the generalization of Theorem \ref{thm:10}.
Since we just need to follow the above procedure, we shall give the
following theorem while skipping the proof.

\begin{thm}
\label{thm:13}Let $u^{\varepsilon}$ be the solution of the elliptic
system $\left(P^{\varepsilon}\right)$ with assumptions $\left(\mbox{A}_{1}\right)-\left(\mbox{A}_{3}\right)$
and \eqref{eq:newR}-\eqref{eq:newF} up to $M$-level of expansion.
Suppose that the unique pair $\left(u_{0},u_{m}\right)\in\mathcal{W}^{\infty}\left(\Omega^{\varepsilon}\right)\times\mathcal{W}^{\infty}\left(\Omega^{\varepsilon};H_{\#}^{1}\left(Y_{1}\right)/\mathbb{R}\right)$
for all $0\le m\le M$. The following correctors for the homogenization
limit hold:
\[
\left\Vert u^{\varepsilon}-\sum_{m=0}^{M}\varepsilon^{m}u_{m}\right\Vert _{\mathcal{V}^{\varepsilon}}\le C\left(\varepsilon^{M-1}+\varepsilon^{M-1/2}\right),
\]
\[
\left\Vert u^{\varepsilon}-u_{0}-m^{\varepsilon}\sum_{m=1}^{M}\varepsilon^{m}u_{m}\right\Vert _{\mathcal{V}^{\varepsilon}}\le C\sum_{m=1}^{M}\varepsilon^{m-1/2}.
\]

\end{thm}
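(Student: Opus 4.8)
The plan is to rerun, for arbitrary $M$, the energy argument that produced Theorem~\ref{thm:10}; by design nothing genuinely new happens, only more powers of $\varepsilon$ must be tracked. First I would set $\varphi_i^\varepsilon:=u_i^\varepsilon-\sum_{m=0}^M\varepsilon^m u_{i,m}$ and, in order to recover on $\Gamma^{ext}$ the homogeneous Dirichlet datum that the higher-order profiles destroy, the corrected remainder $\Psi_i^\varepsilon:=\varphi_i^\varepsilon+(1-m^\varepsilon)\sum_{m=1}^M\varepsilon^m u_{i,m}$, which lies in $V^\varepsilon$ because $1-m^\varepsilon$ vanishes on an $\mathcal O(\varepsilon)$-strip around $\Gamma^{ext}$. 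From the hierarchy \eqref{eq:prob1}--\eqref{eq:probm} and the induction already displayed before the statement, $\varphi_i^\varepsilon$ solves a system of the shape \eqref{eq:24}: the interior right-hand side is $R_i(u^\varepsilon)-\sum_{m=0}^{M-2}\varepsilon^m\bar R_i(u_m)-\varepsilon^{M-1}(\mathcal A_1 u_{i,M}+\mathcal A_2 u_{i,M-1})-\varepsilon^M\mathcal A_2 u_{i,M}$, while the Neumann defect on $\Gamma^\varepsilon$ is $\varepsilon^M d_i^\varepsilon\nabla_x u_{i,M}\cdot\mathrm n+\varepsilon\big[a_i^\varepsilon(\sum_{m=0}^{M-2}\varepsilon^m u_{i,m}-u_i^\varepsilon)+b_i^\varepsilon(F_i(u_i^\varepsilon)-\sum_{m=0}^{M-2}\varepsilon^m\bar F_i(u_{i,m}))\big]$.

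Next I would test this system with $\varphi_i\in V^\varepsilon$, sum over $i$, and bound the left-hand side from below by $\alpha\sum_i|\langle\varphi_i^\varepsilon,\varphi_i\rangle_{V^\varepsilon}|$ via the uniform ellipticity $(\mathrm A_1)$, exactly as in \eqref{eq:4.29}--\eqref{eq:4.35-1}. On the right, the interior operator terms $\mathcal A_1 u_{i,M}$, $\mathcal A_2 u_{i,M-1}$, $\mathcal A_2 u_{i,M}$ have $\varepsilon$-independent $L^2(\Omega^\varepsilon)$-norms thanks to $u_m\in\mathcal W^\infty(\Omega^\varepsilon;H_{\#}^{1}(Y_1)/\mathbb R)$ and to elliptic regularity of $u_0$ (cf.\ \cite{ADN59}), hence contribute $C(\varepsilon^{M-1}+\varepsilon^{M})\|\varphi\|_{\mathcal V^\varepsilon}$; the reaction discrepancy is split by \eqref{eq:newR} into a Lipschitz part $\le C\bar L\,\|u^\varepsilon-u_0\|_{\mathcal V^\varepsilon}$ plus an $\mathcal O(\varepsilon^{M-1})$ remainder; the surface flux $\varepsilon^M d_i^\varepsilon\nabla_x u_{i,M}\cdot\mathrm n$ satisfies $\|d_i^\varepsilon\nabla_x u_{i,M}\cdot\mathrm n\|_{L^2(\Gamma^\varepsilon)}\le C\varepsilon^{-1/2}$; and the Robin defect is controlled, using $(\mathrm A_2)$, \eqref{eq:newF} and the Lipschitz continuity of $F_i,\bar F_i$, by $C(\bar K\,\|u^\varepsilon-u_0\|_{\mathcal H(\Gamma^\varepsilon)}+\varepsilon^{M-1/2})\|\varphi\|_{\mathcal H(\Gamma^\varepsilon)}$. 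Absorbing the boundary norms with the uniform trace inequality $\|\cdot\|_{\mathcal H(\Gamma^\varepsilon)}\le C\varepsilon^{-1/2}\|\cdot\|_{\mathcal V^\varepsilon}$ and the terms $\|u^\varepsilon-u_0\|$ exactly as in Theorem~\ref{thm:10} (they coincide with $\Psi^\varepsilon$ off the strip), I would then add and subtract $\langle(1-m^\varepsilon)\sum_{m=1}^M\varepsilon^m u_{i,m},\varphi_i\rangle_{V^\varepsilon}$ and estimate it as in \eqref{eq:4.35}: since $|\nabla m^\varepsilon|\le C\varepsilon^{-1}$ on a set of measure $\mathcal O(\varepsilon)$, the $m$-th summand costs $\mathcal O(\varepsilon^{m-1/2})$. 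This yields $\sum_i|\langle\Psi_i^\varepsilon,\varphi_i\rangle_{V^\varepsilon}|\le C\sum_{m=1}^M\varepsilon^{m-1/2}\,\|\varphi\|_{\mathcal V^\varepsilon}$; taking $\varphi=\Psi^\varepsilon$ gives the cut-off estimate, while the uncorrected estimate $\|u^\varepsilon-\sum_{m=0}^M\varepsilon^m u_m\|_{\mathcal V^\varepsilon}\le C(\varepsilon^{M-1}+\varepsilon^{M-1/2})$ comes from the residual level of the same computation (the $\varepsilon^{M-1}$ from the interior operator remainder, the $\varepsilon^{M-1/2}$ from the surface flux after the trace loss).

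The step I expect to be the real obstacle is, as at $M=2$, the nonlinear surface reaction: the Robin defect on $\Gamma^\varepsilon$ can be dominated only after paying the $\varepsilon^{-1/2}$ factor of the uniform-in-$\varepsilon$ trace inequality from \cite[Lemma 2.31]{CP99}, which is precisely why the surface contribution degrades every corrector order from $\varepsilon^m$ to $\varepsilon^{m-1/2}$ and why no rate better than $\varepsilon^{1/2}$ survives in the cut-off estimate no matter how large $M$ is. One must also be careful that the $\|u^\varepsilon-u_0\|_{\mathcal V^\varepsilon}$ and $\|u^\varepsilon-u_0\|_{\mathcal H(\Gamma^\varepsilon)}$ produced by the global Lipschitz constants $\bar L,\bar K$ are genuinely reabsorbed — equivalently, that the effective Lipschitz constants coming from the $L^\infty$-bounds of Lemma~\ref{lem:3} stay controlled by $\alpha$, in the same spirit as the contraction argument of Theorem~\ref{thm:9}. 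Everything else is the mechanical collection of powers of $\varepsilon$ already performed in the proof of Theorem~\ref{thm:10}, which is why the proof can be left to the reader as announced.
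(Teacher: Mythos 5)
Your proposal follows exactly the route the paper intends: it reuses the general-$M$ residual identities displayed just before Theorem~\ref{thm:13} and reruns the energy/test-function argument of Theorem~\ref{thm:10} (cut-off $m^{\varepsilon}$, uniform ellipticity, Lipschitz splitting via \eqref{eq:newR}--\eqref{eq:newF}, and the $\varepsilon^{-1/2}$ trace loss), which is precisely the ``same procedure'' the authors invoke when skipping the proof. Your bookkeeping of the powers of $\varepsilon$, including the source of the $\varepsilon^{M-1}$, $\varepsilon^{M-1/2}$ and $\varepsilon^{m-1/2}$ terms and the caveat about reabsorbing the $\left\Vert u^{\varepsilon}-u_{0}\right\Vert$ contributions, is consistent with the paper's argument, so the proposal is correct and essentially identical in approach.
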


\section*{Acknowledgment}
AM thanks NWO MPE "Theoretical estimates of heat losses in geothermal wells" (grant nr. 657.014.004) for funding. VAK gratefully acknowledges the hospitality of Department of Mathematics and Computer Science, Eindhoven University of Technology in The Netherlands.

\section*{References}

\bibliography{mybib}

\end{document}